\documentclass{amsart}
\usepackage{amsthm} 
\usepackage{amstext}
\usepackage{amsbsy}
\usepackage{mathrsfs}
\usepackage[all]{xy}
\usepackage{amscd}
\usepackage{amsmath}
\usepackage{amsfonts}
\usepackage{hyperref}
\usepackage{latexsym}
\usepackage{amssymb}
\usepackage{fancyhdr}
\usepackage{times}
\usepackage{pst-all}
\usepackage{pst-tree}
\usepackage{textcomp}
\usepackage [T1] {fontenc}
\usepackage{graphicx}
\usepackage{pst-plot}
\usepackage[graphics,floats,textmath,displaymath,delayed]{preview}
\usepackage{url}

\newtheorem{theorem}{Theorem}
\newtheorem{lemma}[theorem]{Lemma}
\newtheorem{proposition}{Proposition}
\newtheorem{corollary}[theorem]{Corollary}

\theoremstyle{definition}
\newtheorem{definition}[theorem]{Definition}

\theoremstyle{remark}
\newtheorem{remark}[theorem]{Remark}

\numberwithin{equation}{section} 


\newcommand{\R}{\mathbb{R}}

\newcommand{\N}{\mathbb{N}}

\newcommand{\Z}{\mathbb{Z}}

\newcommand{\PP}{\mathbb{P}}

\newcommand{\A}{\mathcal{A}}

\newcommand{\code}[7]{\begin{array}{|c|c|c|c|}
\cline{1-3}  #1 & #2 & #3 & \multicolumn{1}{c}{$$} \\\hline #4 & #5 & #6 & #7 \\\hline
\end{array} }
\newcommand{\astc}{\textasteriskcentered}
\newcommand{\rowone}{1&1&1&1&1&0&0&0&0&0&0&0&0&0&0&0}
\newcommand{\rowtwo}{0&0&0&0&0&1&1&1&1&1&0&0&0&0&0&0}
\newcommand{\rowfive}{0&0&0&0&0&0&0&0&0&0&1&1&1&0&0&0}
\newcommand{\rowsix}{0&0&0&0&0&0&0&0&0&0&0&0&0&1&1&1}

\begin{document}

\begin{center}

\normalsize{\textbf{HYPERBOLICITY OF THE TRACE MAP FOR A STRONGLY COUPLED QUASIPERIODIC SCHR\"ODINGER OPERATOR}}

\vspace{1cm}

\footnotesize{

\textrm{EMILIANO DE SIMONE, LAURENT MARIN} \\

\vspace{0.2cm}

\textit{Department of Mathematics, University of Helsinki, P.O.Box 68 (Gustaf H\"allstr\"omin katu 2{b}) \\
Helsinki, 00014 , Finland \\ emiliano.desimone@helsinki.fi \\ laurent.marin@univ-orleans.fr} 
}

\end{center}
\vspace{0.5cm}

\footnotesize{ 

\begin{center} \emph{Abstract}
\end{center}
\vspace{0.2cm}

\begin{center}
\parbox[c]{300pt}{We consider the trace map associated with the silver ratio Schrödinger operator as a diffeomorphism on the invariant surface associated with a given coupling constant and prove that the non-wandering set of this map is hyperbolic if the coupling is sufficiently large. As a consequence, for this values of the coupling constant, the local and global Hausdorff dimension and the local and global box counting dimension of the spectrum of this operator all coincide and are smooth functions of the coupling constant.}
\end{center}
\vspace{0.4cm}

\textit{Keywords}: Symbolic Dynamics, Smale Horseshoe, Schr\"odinger operators

\vspace{0.4cm}

Mathematics Subject Classification 2000: 

}

\normalsize

\section{Introduction}

Consider the discrete Schrödinger operator associated to a Sturmian quasiperiodic potential:
\begin{equation}
(H \psi ) (n) = \psi(n-1) - \psi(n+1) + v(n, \theta)\psi(n)
\end{equation}

The quasi periodicity of $v(n)$ is generated by an irrational number $\theta$ according to the formula
\begin{equation}
v(n, \theta) = V( [ (n+1) \theta] - [ n \theta] ) \label{potential}
\end{equation}
where $V \in \mathbb R^+$ is the coupling constant and for $x \in \mathbb R$, $[x]$ denotes the largest integer smaller than $|x|$.

This operator family, who describes electrical properties of quasicrystals, exhibits a number of interesting phenomena, such as Cantor spectrum of Lebesgue measure zero \cite{s1,BIST1989} and purely singular continuous spectral measure \cite{DamLenz}. Moreover, it was recently shown that is also gives rise to anomalous transport for a large class of irrational number \cite{dam1, LaurentThesis}.

We restrict our attention to the potential associated to the irrational number $\omega$, the so-called silver ratio, whose continued fraction expansion is given by
\begin{equation}
\omega = \frac{1}{2+\frac{1}{2+ \frac{1}{2 + \cdots}}} = [0,2, 2, \ldots 2, \ldots].
\end{equation}

We shall show that this restriction, we can study the spectrum of $H$ by means of an auxilliairy  dynamical system, described by the so-called trace map
\begin{equation}
T(x, y, z) := (x(y^2-1) - zy, xy-z, y ), 
\end{equation}

In great generality, the properties of the trace map are known to be closely related to all the spectral properties mentioned above \cite{s1,BIST1989,Raymond,dam1}. The expression of the trace map and its connection to spectral properties of the operators is a consequence of the quasiperiodic structure of the  potential (e.g. \cite{Lot} for more details on quasiperiodic matters). 
  
We shall show that the spectrum of $H$ can be determined by studying the non wandering set of $T$ on a invariant surface $S_V$ that will be introduced later on. Since the spectrum is of Lebesgue measure zero, it is logical to investigate its fractal dimension. Moreover, its fractal dimension are known to be linked with spectral and dynamical properties \cite{tche2,Raymond,chin1,LaurentThesis}.
This study has been  already done in the case where the operator is the so-called Fibonacci Hamiltonian for which the non-wandering set of $T$ on $S_V$ can be proven to be hyperbolic for $V>16$ \cite{cas} and for small coupling \cite{damgorod}. For this values of coupling, the general theory of hyperbolic surface diffeomorphisms yields  the exact asymptotic behavior  of the fractal dimension of the spectrum of the Fibonacci Hamiltonian as a function of $V$\cite{tche2,damgorod,pesin,manning,palis}.

The idea of this paper is to provide the same analysis for the  Schrödinger operator associated to the silver ratio and prove the hyperbolicity of the trace map, for large coupling.  We obtain then the same properties for the fractal dimension of the spectrum, that is coincidence of local and global Hausdorff and box counting dimension. Moreover all these dimensions are smooth function of the coupling constant $V$. The interest in considering the silver ratio Schrodinger operator is that this model provides the simpliest case where the pseudo spectrum and the so-called auxilliairy spectrum do not coincide. This fact yields  a much more complicated pattern when one studies the trace map as a dynamical system.

After this paper was finished we learned that Serge Cantat provided a proof of uniform hyperbolicity of the trace map for all non zero values of the coupling and all quadratic irrational numbers \cite{Cantat}. Our results were obtained independently and we use completely different methods.

\section{Description of trace map and statements of the  results}

The main tool that we shall use here is the trace map.  Let us recall how it arises from the structure of the quasi periodical potential (see \cite{s1,BIST1989,tes} for detailed proofs of the statements below).

Considering the difference equation,
\begin{equation}\label{freeequation}
H \psi = E \psi  
\end{equation}
one can see any solution of (\ref{freeequation}) fulfills the following relation:
\begin{equation}
\psi_{n+1} = T_n(V,\omega,E) \psi_{n}  \quad \text{for all} \quad n \in \mathbb N \label{transfer}
\end{equation}

where the transfer matrix $T_n$ is given by
\begin{equation}
T_n(V,\omega,E) = \left(\begin{array}{cc} E - V v(n, \omega) & -1 \\1 & 0\end{array}\right).
\end{equation}

Denote the Pell number sequence by $\{F_m\}_{m \in \mathbb N}$ defined by the relation 
\begin{equation}
\left\{\begin{array}{l} \label{relation}
 F_{-1}=0 \; , \; F_0 = 1 \\
F_{m+1} = 2F_m + F_{m-1},
\end{array}\right.
\end{equation}
Then, one can show that

 $M_m := T_{F_m}(V,\omega,E)T_{F_m -1}(V,\omega,E) \cdots T_1(V,\omega,E)$, obey the following renormalization relation for any $m\geq 0$
\begin{equation}\label{renormalization}
M_{m+1} = M_{m}^2M_{m-1}, \quad M_{-1} = \left(\begin{array}{cc}1 & -V \\0 & 1\end{array}\right) \quad M_0 = \left(\begin{array}{cc}E & -1 \\1 & 0\end{array}\right).
\end{equation}

It follows that defining $u_m := \text{Tr} (M_{m-1}M_m)$ and $v_m := \text{Tr} (M_m)$, the two infinite sequences $\{u_m\}_{m \in \N} , \{v_m\}_{m \in \N}$ obey the following recurrence relations
\begin{equation} \label{recurrence}
\left \{ \begin{array}{ll} 
u_{m+1} = u_m(v_m^2-1) - v_{m-1}v_m , & u_0 = E-V\\ 
v_{m+1} = u_mv_m-v_{m-1}  , & v_{-1} = 2 \: , \: v_0 = E,
\end{array}
\right.
\end{equation}

who have a conserved quantity, precisely 
\begin{equation}\label{invariant}
u_m^2 + v_m^2 +v_{m-1}^2 - u_m v_m v_{m-1} = V^2 +4, \forall m.
\end{equation}

The renormalization map is then defined by
\begin{equation}
T(x, y, z) := (x(y^2-1) - zy, xy-z, y ), \label{RG}
\end{equation}
and it acts by shifting $u_n$ and $v_n$ in the following way:
\begin{equation} \label{shift1}
T : (u_m, v_m , v_{m-1}) \mapsto (u_{m+1}, v_{m+1}, v_m).
\end{equation}

We should remark at this point, that if one considers the Fibonacci Hamiltonian then one has $u_m = v_{m+1}$ for all $m$. The auxilliairy spectrum defined from the sequence $\{u_m\}_m$ coincide with the pseudo spectrum defined with the sequence $\{v_m\}_m$. This particular fact  yields to a two-coordinates shifting trace map. The trace map of silver ratio Schrödinger operator shifts only one coordinate, namely $y$.  This general pattern implies a  more complicated combinatorics for the dynamical system associated to $T$.

The relation \ref{invariant} implies  that $T$ preserves the family of cubic surfaces :
\begin{equation}
S_V = \{ (x,y,z) \in \Rö^{3} \, |\, x^2 + y^2 + z^2 -xyz = 4 + V^2 \}.
\end{equation}

Thus, defining the line in $\R^3$, $\ell_V(E) := (E-V,E,2)$, we can express the spectrum of $H$ in terms of $T$:
\begin{equation} \label{spectrum}
\sigma(H) = \{E \: | \:  \lim_{n \rightarrow \infty } \pi_{y} T^n(\ell_V(E)) < \infty \},
\end{equation}
where $\pi_y : (x,y,z) \mapsto y$ denotes the projection onto the second coordinate axis. 

This is a simple rewriting of the coincidence between the spectrum and the pseudo spectrum proven by Süt\H{o} in \cite{s1} and by Bellissard et al. in \cite{BIST1989}.

We notice that for all $V \in \R$ the line $\ell_V$ is contained in the $T$-invariant surface $S_V$
We shall denote $T_{V} = T{|_{S_{V}}}$ and in order to derive results on the spectrum of $H$ we shall study the non-wandering set $\Sigma_V$ of $T_V$ and prove that it is a Cantor set with an hyperbolic structure.

We recall that an invariant closed set $\Omega$ of a diffeomophism $f: M \to M$ is hyperbolic if there exists a splitting of the tangent space $T_xM = E^u_x \oplus E^s_x$ at every point $x \in \Omega$ such that the splitting is invariant under $Df$ and the differential $Df$ exponentially contracts vectors from the stable subspaces $\{E_x^s\}$ and exponentially expands vectors from the unstable subspaces $E^u_x $. A hyperbolic set $\Omega$ is locally maximal if there exists a neighborhood $U(\Omega)$ such that
$$
\Omega = \bigcap_{n \in \Z} f^n(U).
$$

\begin{theorem}\label{thprincipal}
For $V$ sufficiently large, we have that the non wandering set $\Omega_V$ of $T_V$ is a locally maximal hyperbolic set that is homeomorphic to a Cantor set.
\end{theorem}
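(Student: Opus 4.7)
The plan is to follow the Casdagli-style strategy used for the Fibonacci Hamiltonian, adapted to the richer symbolic dynamics dictated by the Pell recurrence $F_{m+1}=2F_m+F_{m-1}$. The goal is to produce a Markov partition on a trapping region in $S_V$, verify hyperbolicity via an invariant cone field, and read off the Cantor structure from the resulting conjugacy with a subshift of finite type.

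I would begin by analysing the geometry of $S_V$ for $V$ large. The invariant (\ref{invariant}) splits $S_V$ into a bounded central piece together with four asymptotic branches escaping along coordinate planes. Since $T$ shifts only the $y$-coordinate and $v_m=\text{Tr}(M_m)$, the relevant dichotomy is $|y|\le 2$ versus $|y|>2$: using (\ref{invariant}) one checks that as soon as an orbit stays deep inside $|y|>2$, at least one of the remaining coordinates is forced to blow up, so any orbit in the non-wandering set must return repeatedly to the compact box $B := \{|y|\le 2\}\cap S_V$.

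Next I would enumerate the branches of the first-return map to $B$. Each return corresponds, through the renormalization $M_{m+1}=M_m^2 M_{m-1}$, to a finite word in the silver-ratio substitution $a\mapsto aab$, $b\mapsto a$; the extra combinatorial width relative to Fibonacci comes from the $M_m^2$ factor, and is the source of the more complicated combinatorics flagged in the introduction. For each admissible branch I would check geometrically, using the explicit Jacobian
\begin{equation*}
DT(x,y,z) = \begin{pmatrix} y^2-1 & 2xy-z & -y \\ y & x & -1 \\ 0 & 1 & 0 \end{pmatrix}
\end{equation*}
(whose determinant is $-1$, so $T$ is area preserving on $S_V$), that the branch maps its domain across $B$ in horseshoe fashion, producing disjoint rectangles $R_1,\dots,R_N$ that form a Markov partition. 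On each $R_i$ I would then define unstable cones around the expanding eigendirection of the appropriate iterate of $DT$ and stable cones around the contracting one; for $V$ large the entries of $DT$ involving $y$ and $x$ dominate on $R_i$, making the cone field strictly invariant with expansion and contraction rates tending to infinity with $V$. The standard cone-field criterion then yields hyperbolicity of $\Lambda_V=\bigcap_{n\in\Z} T_V^n(B)$; local maximality follows by extending the cones to a neighbourhood, the escape argument from (\ref{invariant}) identifies $\Lambda_V$ with $\Omega_V$, and the Markov partition conjugates the dynamics to a subshift of finite type, yielding the Cantor structure.

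The main obstacle, as the authors themselves emphasise, is the branch enumeration. Because $T$ shifts only one coordinate (so that in general $u_m\ne v_{m+1}$), the symbolic description must track the pair $(u_m,v_m)$ jointly, and the list of admissible returns to $B$ depends on a case analysis according to the sign patterns of $u_m$, $v_m$ and $v_{m-1}$ compatible with (\ref{invariant}). Verifying that every such branch realises a genuine full Markov crossing of $B$ with uniform horseshoe geometry, for all $V$ above some explicit threshold, is the technical heart of the argument; once this combinatorial geometry is pinned down, the cone-field verification reduces to a bounded number of routine estimates using the explicit form of $DT$ above.
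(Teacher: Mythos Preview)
Your strategy is precisely that of the paper: confine the non-wandering set via an escape lemma, build a finite Markov partition of a trapping region, verify an invariant cone field on the pieces, and conclude via conjugacy with a subshift of finite type. Two implementation choices differ from yours. First, the escape criterion is sharper than ``orbit stays in $|y|>2$'': the paper shows (Lemma~\ref{propertyP}) that the orbit diverges as soon as any two \emph{adjacent} entries of the triple $(u_n,v_n,v_{n-1})$ exceed $2$ in modulus, and it is this two-variable condition that drives the ten-piece partition $R_1,\dots,R_{10}$ encoded by the seven-box table ${\bf w}(x,y,z)$. Second, rather than the first-return map to $\{|y|\le 2\}$, the paper takes as base $\widetilde S_V=R_1\cup R_2\cup R_5\cup R_6$ (which includes pieces with $|y|>2$) and works with the mixed map $\Phi$ equal to $T^2$ on $R_1\cup R_2$ and $T$ on $R_5\cup R_6$; this produces sixteen vertical strips $B_1,\dots,B_{16}$ and a $16\times16$ transition matrix.

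One point you should not file under ``routine estimates'': in the Fibonacci case an involutive conjugacy between $T$ and $T^{-1}$ reduces the forward cone check to the backward one. Here the conjugacy $T^{-1}=\rho\circ T\circ\rho$ uses the cyclic permutation $\rho(x,y,z)=(y,z,x)$, which is \emph{not} an involution, and the paper finds that this symmetry handles only the strips $B_{11},\dots,B_{16}$. For $B_1,\dots,B_{10}$ the expansion estimates on $S^+$ must be done directly, and for $B_4,B_5,B_9,B_{10}$ an ad hoc lemma is needed to exclude the near-degenerate regime $|xy-z|\ll 1$, without which the leading-order analysis of $d\Phi$ is inconclusive. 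This is exactly where the ``auxiliary spectrum $\neq$ pseudo-spectrum'' complication flagged in the introduction actually bites, so your closing sentence understates the difficulty.
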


\subsection{Some Properties of Locally Maximal Hyperbolic Invariant sets of surface Diffeomorphisms.}

Given Theorem \ref{thprincipal}, several general results apply to the trace map of the strongly coupled silver ratio Schr\"{o}dinger operator. This section recalls as it was previously done in [\cite{damgorod,tche2}],  some of these results that yield interesting spectral consequences, which are discussed below.

Consider a locally maximal invariant transitive hyperbolic set $\Lambda \subset M$, $\dim M = 2$, of a diffeomorphism $f \in \text{Diff}^r (M), r \geq 1$. We have $\Lambda = \bigcap_{n \in \Z} f^n(U(\Lambda))$ for some neighborhood $U(\Lambda)$. Assume also that $\dim E^u = \dim E^s =1$. Then, the following properties hold.

\subsubsection{Stability}

There is a neighborhood $\mathcal{U} \subset \text{Diff}^1 (M)$ of the map $f$ such that for every $g \in \mathcal{U}$, the set $\Lambda_g =  
\bigcap_{n \in \Z} g^n(U(\Lambda))$ is a locally maximal invariant hyperbolic set of $g$. Moreover, there is a homeomorphism $h: \Lambda \to \Lambda_g$ that conjugates $f|_\Lambda$ and $g|_{\Lambda_g}$, that is, the following diagram commutes:

$$
\xymatrix{
\Lambda \ar[d]_h \ar[r]^{f|_\Lambda} &  \Lambda \ar[d]^h \\
\Lambda_g \ar[r]^{g|_{\Lambda_g}} & \Lambda_g
}
$$

\subsubsection{Invariant Manifolds}
For $x \in \Lambda$ and small $\varepsilon >0$, consider the local stable and unstable sets
$$
W_\varepsilon^s(x) = \{w \in M : d(f^n(x), f^n(w)) \leq \varepsilon \hspace{2mm} \text{for all}  \hspace{2mm} n \geq 0 \},
$$ 
$$
W_\varepsilon^u(x) = \{w \in M : d(f^n(x), f^n(w)) \leq \varepsilon \hspace{2mm} \text{for all}  \hspace{2mm} n \leq 0 \}.
$$ 

If $\varepsilon >0$ is small enough, these sets are embedded $C^r$-disks with $T_xW_\varepsilon^s (x) = E^s_x$ and $T_xW_\varepsilon^u (x) = E^u_x$.
Define the (global) stable and unstable sets as
$$
W^s(x) = \bigcup_{n \in \N } f^{-n} (W_\varepsilon^s(x)),
\hspace{2mm}W^u(x) = \bigcup_{n \in \N } f^{n} (W_\varepsilon^u(x)).
$$
Define also
$$
W^s(\Lambda) =\bigcup_{x \in \Lambda }W^s(x)\hspace{2mm} \text{ and }
\hspace{2mm} W^u(\Lambda) =\bigcup_{x \in \Lambda }W^u(x).
$$

\subsubsection{Invariant Foliations}
A stable foliation for $\Lambda$ is a foliation $\mathcal{F}^s $ of a neighborhood of $\Lambda$ such that 
\begin{itemize}
\item[(1)] For each $x \in \Lambda, \mathcal{F}^s(x)$, the leaf containing $x$, is tangent to $E^s_x$,
\item[(2)] For each $x$ sufficiently close to $\Lambda$,  $f( \mathcal{F}^s(x)) \subset  \mathcal{F}^s(f(x))$.
\end{itemize}

An unstable foliation $\mathcal{F}^u$ can be defined in a similar way.

For a locally maximal hyperbolic set $\Lambda \subset M$ of a $C^1$-diffeomorphism $f: M \to M$, $\dim M= 2$, stable and unstable $C^0$ foliations with $C^1$-leaves can be constructed \cite{melo}.
In the case of $C^2$-diffeomorphism, $C^1$ invariant foliations exist (see, for example, \cite{palis2}, theorem 8 in Appendix 1).

\subsubsection{Local Hausdorff  and Box counting Dimension}

Consider, for $x \in \Lambda$ and small $\varepsilon > 0 $, the set $W^u_\varepsilon(x) \cap \Lambda$. Its Hausdorff dimension does not depend on $x \in  \Lambda $ and $\varepsilon >0$, and coincides with its local box counting dimension (see \cite{manning},\cite{takens}): 
$$
\dim_H W^u_\varepsilon(x) \cap \Lambda=\dim_B W^u_\varepsilon(x) \cap \Lambda.
$$
In a similar way,
$$
\dim_H W^s_\varepsilon(x) \cap \Lambda=\dim_B W^s_\varepsilon(x) \cap \Lambda.
$$
Denote $h^s = \dim_H W^s_\varepsilon(x) \cap \Lambda$ and 
$h^u = \dim_H W^u_\varepsilon(x) \cap \Lambda$. We will say that $h^s$ and $h^u$ are local stable and unstable Hausdorff dimensions of $\Lambda$.

For properly chosen small $\varepsilon >0 $, the sets 
$W^u_\varepsilon(x) \cap \Lambda$ and $W^s_\varepsilon(x) \cap \Lambda$
 are dynamically defined Cantor sets (see \cite{palis3} for definition and proof), and this implies, in particular, that
$$
h^s <1 \hspace{2mm} \text{and} \hspace{2mm} h^u <1,
$$
see, for example, Theorem 14.5 in  \cite{pesin}.

\subsubsection{Global Hausdorff Dimension} The Hausdorff dimension of 
$\Lambda$ is equal to its box counting dimension, and 
$$
\dim_H \Lambda =\dim_B \Lambda = h^s + h^u,
$$
see (\cite{manning,palis}).

\subsubsection{Continuity of the Hausdorff Dimension}

The local Hausdorff dimensions $h^s(\Lambda)$ and $h^u(\Lambda)$ depend continuously on $f: M \to M$ in the $C^1$-topology; see (\cite{manning,palis}). Therefore, $\dim_H \Lambda_f = \dim_B \Lambda_f = h^s(\Lambda_f) + h^u(\Lambda_f)$ also depends continuously on $f$ in the $C^1$-topology. Moreover, for a $C^r$ diffeomorphism $f: M \to M, r\geq 2$, the Hausdorff dimension of a hyperbolic set $\Lambda_f$ is a $C^{r-1}$ function of $f$, see (\cite{mane}).

\begin{remark}
For hyperbolic sets in dimension greater than two, many of these properties do not hold in general, see \cite{pesin} for more details.
\end{remark}

\subsection{Implications for the trace map and the spectrum}

Due to theorem \ref{thprincipal}, for $V$ sufficiently large, all the properties from the previous subsection can be applied to the hyperbolic set $\Omega_V$ of the trace map $T_V : S_V \to S_V$.

We have the following statement
\begin{lemma}
For $V$ large enough, and every $x \in \Omega_V$, the stable manifold
$W^s(x)$ intersects the line  $\ell_V$ transversally. 
\end{lemma}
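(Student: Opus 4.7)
\noindent\textit{Plan.} The idea is to establish the transversality by means of the invariant cone field provided by the hyperbolic structure on $\Omega_V$, in the spirit of the corresponding argument for the Fibonacci Hamiltonian in \cite{damgorod}. By Theorem~\ref{thprincipal}, for $V$ sufficiently large the set $\Omega_V$ is uniformly hyperbolic and locally maximal on $S_V$, so there exist continuous cone fields $C^u$ and $C^s$ on a neighborhood $U$ of $\Omega_V$ with $C^u \cap C^s = \{0\}$, such that $DT_V$ maps $C^u$ into its own interior along forward orbits and expands vectors in $C^u$ uniformly, and dually for $C^s$ backwards.

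First I would reduce the transversality question at a point $p = \ell_V(E) \in W^s(x) \cap \ell_V$ to an infinitesimal statement about the tangent direction of $\ell_V$. A direct computation shows that $\frac{d}{dE}\ell_V(E) = (1,1,0)$ is tangent to $S_V$ at every $E$; call this vector $v_0$. Since $p \in W^s(x)$, the forward orbit $p_n := T_V^n(p)$ eventually enters and stays in $U$. If for some $n$ one has $DT_V^n(v_0) \in C^u(p_n)$, then forward invariance and uniform expansion of $C^u$ force $|DT_V^m(v_0)| \to \infty$ as $m \to \infty$, so $v_0 \notin E^s_p$, which is exactly the desired transversality at $p$. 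Hence it suffices to exhibit, for every such $p$, a finite $n$ at which $DT_V^n(v_0)$ enters $C^u(p_n)$.

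To verify this finite-time entry into $C^u$, I would use the explicit formula for $DT_V$. The dominant terms are $y^2 - 1$ in the $x$-slot and $y$ in the $y$-slot, so at points of $\Omega_V$ (where $|y|$ is separated from $1$ for $V$ large) the image of $v_0$ has a much larger $x$-component than its other components, placing it well inside the unstable cone constructed in the proof of Theorem~\ref{thprincipal}. For points $p$ whose orbit passes through transitional regions with $|y|$ close to $1$, I would exploit the Markov coding of $\Omega_V$ available from the proof of Theorem~\ref{thprincipal} to bound the number of consecutive iterates spent in each such region and to estimate $DT_V^n(v_0)$ over the corresponding finite block.

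The main obstacle is handling the orbits whose itinerary visits the boundary pieces of the Markov partition: these are precisely the places where the silver-ratio trace map, which shifts only the $y$-coordinate and for which the pseudo- and auxiliary spectra differ, has a more intricate combinatorics than the Fibonacci case. I expect this to require a finite case analysis on the first few symbols of the itinerary of $p$, combined with the quantitative contraction and expansion estimates already produced for Theorem~\ref{thprincipal}, showing that for every admissible initial block the image of $v_0$ under $DT_V^n$ is confined inside $C^u$ after a uniformly bounded $n$.
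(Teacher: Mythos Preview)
The paper does not actually prove this lemma: it is stated in Section~2.2 and then immediately used (via the $C^1$ stable foliation) to derive the corollary on the spectrum, with no intervening argument. So there is no proof in the paper to compare against; presumably the authors regard it as following from the machinery in \cite{cas,damgorod} once Theorem~\ref{thprincipal} and the explicit cone field of Proposition~\ref{symbolicok} are in hand.

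Your plan is exactly that machinery, and the overall strategy---show that the tangent direction $v_0=(1,1,0)$ of $\ell_V$ is eventually trapped in the unstable cone field along the forward orbit, hence cannot lie in $E^s_p$---is correct. One remark that simplifies the analysis you anticipate: with the cones actually built in condition~\textbf{(b)} of Proposition~\ref{symbolicok}, namely $S_1^+=\{|\eta|\le\tfrac13|\xi|\}$ over $R_1\cup R_2$ and $S_2^+=\{|\eta|\le\tfrac13|\zeta|\}$ over $R_5\cup R_6$, the vector $v_0=(1,1,0)$ has $\eta_0=0$ and so sits in the \emph{interior} of whichever cone is relevant, at the very first iterate that lands in the cone domain. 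Thus the ``finite case analysis on the first few symbols of the itinerary'' you flag as the main obstacle is not really needed for entry into $C^u$; what does require care is only ensuring that the intermediate iterates of $DT_V$ (before the orbit reaches $\widetilde S_V$) do not push $v_0$ out of the cone, and for this a single application of the explicit Jacobian $MT$ at $\ell_V(E)=(E-V,E,2)$ already gives $\eta_1=\zeta_0=1$ against $\xi_1=(E^2-1)+2E(E-V)-2$, which for large $V$ is of order $V$, so the image is still well inside $S^+$. After that, the invariance \eqref{conespreservation} and expansion \eqref{cones} take over.
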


The existence of a $C^1$-foliation $\mathcal{F}^s$ allows one to locally consider the set $W^s(\Omega_V) \cap \ell_V$ as a $C^1$-image of the set $W^u_\varepsilon(x) \cap \Omega_V$. Due to relation \ref{spectrum}, this implies the following properties of the spectrum $\sigma(H)$ for sufficiently strong coupling:

\begin{corollary}
For  $V$ sufficiently large, we have the following properties:

(1) The spectrum of $H$, $\sigma(H)$ depends continuously on  $V$ with respect to the Hausdorff metric.

(2) We have $dim_H(\sigma (H))=dim_B(\sigma (H))$.

(3) For all sufficiently small $\varepsilon >0$ and all $E \in 
\sigma (H)$, one has
$$
dim_H ([E-\varepsilon,E+\varepsilon]\cap\sigma (H) = \dim_H (\sigma (H)).
$$
et 
$$
dim_B ([E-\varepsilon,E+\varepsilon]\cap\sigma (H) = \dim_B (\sigma (H)).
$$

(4) The Hausdorff dimension  $\dim_H (\sigma (H))$ is a  $C^\infty$-function of $V$.
\end{corollary}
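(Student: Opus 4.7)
The plan is to reduce every statement about the spectrum to the corresponding property of the hyperbolic set $\Omega_V$ collected in the previous subsection, via the dynamical characterization (\ref{spectrum}) of $\sigma(H)$ and the transversality lemma stated just above.

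First I identify $\sigma(H)$ with $W^s(\Omega_V)\cap \ell_V$ under the $C^\infty$ parametrization $E \mapsto \ell_V(E)$. One direction is immediate: if the forward $T_V$-orbit of $\ell_V(E)$ stays bounded, then its $\omega$-limit set lies in $\Omega_V$, so $\ell_V(E)\in W^s(\Omega_V)$; conversely, points of $W^s(\Omega_V)$ have bounded forward orbits. Combining the preceding lemma with a $C^1$ stable foliation $\F^s$ of a neighborhood of $\Omega_V$, the stable holonomy then defines, near any $x\in\Omega_V$, a $C^1$-diffeomorphism from $W^u_\varepsilon(x)\cap\Omega_V$ onto a relatively open piece of $W^s(\Omega_V)\cap\ell_V$. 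By compactness of $\Omega_V$, finitely many such charts cover $\sigma(H)$.

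Given this identification, items (2) and (3) follow from the local- and global-dimension statements recalled above: $W^u_\varepsilon(x)\cap\Omega_V$ is a dynamically defined Cantor set whose local and global Hausdorff and box-counting dimensions all equal $h^u$, and these dimensions are $C^1$-invariants, hence transfer to $\sigma(H)$. For (1), structural stability gives continuous dependence of $\Omega_V$ on $V$ in the Hausdorff metric; since $S_V$, $\ell_V$ and $\F^s$ also depend continuously on $V$, the intersection $W^s(\Omega_V)\cap\ell_V\cong\sigma(H)$ inherits this continuity. For (4), one uses that $T_V$ depends polynomially, hence smoothly, on $V$, so by the smoothness result cited in the previous subsection $h^u(\Omega_V)$ is a $C^\infty$ function of $V$, and by (2) so is $\dim_H\sigma(H)$.

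The main obstacle is making the local-to-global reduction fully watertight: one must verify that the stable holonomy charts can be chosen uniformly across a Markov partition of $\Omega_V$, so that the finitely many $C^1$-pieces fit together into a genuine $C^1$ correspondence $\sigma(H) \leftrightarrow W^u_\varepsilon(x)\cap\Omega_V$, and that no $E\in\sigma(H)$ sits on the boundary of any such chart. Once this is in place, each of (1)--(4) becomes a direct translation of the corresponding property listed in the previous subsection.
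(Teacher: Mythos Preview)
Your proposal is correct and follows essentially the same approach as the paper: the paper itself does not give a detailed proof of this corollary, but simply remarks that the existence of a $C^1$ stable foliation together with the transversality lemma makes $W^s(\Omega_V)\cap\ell_V$ locally a $C^1$-image of $W^u_\varepsilon(x)\cap\Omega_V$, and then invokes relation~(\ref{spectrum}) and the general properties of locally maximal hyperbolic sets listed in the preceding subsection. Your write-up fleshes out exactly this argument, including the identification $\sigma(H)\cong W^s(\Omega_V)\cap\ell_V$ and the holonomy charts, and is in fact more careful about the local-to-global patching than the paper is.
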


\subsection{Notation}
In order to study $\Sigma_V$ we shall first introduce some notation, and make use of the following relation between $T$ and its inverse:
\begin{equation}\label{conj}
T_V^{-1}(x,y,x) = (yz-x,z,y(z^2-1)-xz) = \rho \circ T_V \circ \rho (x,y,z),
\end{equation} 
where $\rho(x,y,z) = (y,z,x)$.
We define ${\bf w}(x,y,z)$ to be the seven elements table defined as follows
\begin{equation} \label{code}
{\bf w}(x,y,z) = \begin{array}{|c|c|c|c|}
\cline{1-3}  x(y^2 -1) - zy & x & yz-x & \multicolumn{1}{c}{$$} \\\hline xy-z & y & z & y(z^2 -1) - xz \\\hline 
\end{array}
\end{equation}
Let us clarify this definition: if we take the initial condition $(u_0, v_0, v_{-1}) = (x,y,z)$ to define two bi-infinite sequences by the relation \eqref{recurrence}, we get
\begin{equation}
{\bf w}(x,y,z) = \code{u_{1}}{u_{0}}{u_{-1}}{v_{1}}{v_0}{v_{-1}}{v_{-2}} \;.
\end{equation}
Thus, in view of \eqref{shift1} and \eqref{RG}, the three (overlapping) L-shaped subtables $\begin{array}{|c|c|}
\cline{1-1}   & \multicolumn{1}{c}{$$} \\\hline  &  \\\hline
\end{array} $
contained in the table in the r.h.s of \eqref{code} denote, respectively, from left to right:
\begin{equation} 
T_V(x,y,z) , \quad (x,y,z), \quad  T_V^{-1}(x,y,z),
\end{equation}
hence ${\bf w}(x,y,z)$ encodes informations about the "past" and the "future" of the point $(x,y,z)$ under the action of $T$.

To exploit the notation introduced so far, we state the following Lemma:

\begin{lemma} \label{propertyP} A necessary condition for the two bi-infinite sequence generated by \eqref{recurrence} to remain bounded is that, for all $n \in \Z$, no two consecutive terms of a triplet of the form $(u_n, v_n, v_{n-1})$ have modulus larger than 2.
\end{lemma}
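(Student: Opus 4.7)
The plan is to prove the contrapositive: assuming some triplet $(u_{n_0}, v_{n_0}, v_{n_0-1})$ contains two adjacent entries of modulus greater than $2$, I deduce that the sequences $\{u_m\}, \{v_m\}$ must be unbounded. Two adjacency configurations arise: (A) $|u_{n_0}|, |v_{n_0}|>2$; (B) $|v_{n_0}|, |v_{n_0-1}|>2$.

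For case (A), I first derive from \eqref{recurrence} and \eqref{invariant} the Vieta-type identities
\[
v_{n+1}+v_{n-1}=u_n v_n, \qquad v_{n+1}v_{n-1}=u_n^2+v_n^2-V^2-4;
\]
the first is immediate from \eqref{recurrence}, and the second comes from expanding $(u_n v_n-v_{n-1})v_{n-1}$ and substituting $u_n v_n v_{n-1}$ from \eqref{invariant}. The first identity gives $|v_{n+1}|+|v_{n-1}|\geq|u_n v_n|>4$, so at least one of $|v_{n\pm1}|$ exceeds $2$; combined with $|v_n|>2$, this reduces case (A) to case (B) at index $n$ or $n+1$.

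The core of the argument is the identity
\[
v_{n+1}v_{n-1}+v_n v_{n-2} = (v_n^2-1)(v_{n-1}^2-1) - 1,
\]
valid for every $n$ and remarkably independent of $u_n$ and $V$. It is proved by substituting $v_{n+1}=u_n v_n-v_{n-1}$ and, using the reversibility \eqref{conj}, $v_{n-2}=v_n(v_{n-1}^2-1)-u_n v_{n-1}$, and observing that the $u_n$-terms cancel. Writing $M:=\max(|v_n|,|v_{n-1}|)$, the triangle inequality together with $|v_n|,|v_{n-1}|>2$ (which ensures $(v_n^2-1)(v_{n-1}^2-1)-1\geq 3M^2-4$) gives
\[
2M\,\max(|v_{n+1}|,|v_{n-2}|) \geq (v_n^2-1)(v_{n-1}^2-1) - 1 \geq 3M^2-4,
\]
and a short algebraic manipulation then yields $\max(|v_{n+1}|,|v_{n-2}|)-2 \geq \tfrac{3}{2}(M-2)$.

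Whichever of $|v_{n+1}|,|v_{n-2}|$ attains the maximum determines the direction of iteration: if $|v_{n+1}|$ does, then the new pair $(v_{n+1},v_n)$ satisfies the same hypotheses of case (B), and since $|v_{n-1}|\leq M<|v_{n+1}|$, applying the identity again forces the next maximum to be attained by $|v_{n+2}|$, locking the iteration forward; symmetrically, $|v_{n-2}|$-dominance locks it backward. In either direction the excess over $2$ of the running maximum grows by a factor of at least $\tfrac{3}{2}$ per step and hence diverges, contradicting boundedness. The main potential obstacle is the boundary regime $M\searrow 2$, where the naive multiplicative growth factor on $M$ itself approaches $1$; however, the uniform amplification $\tfrac{3}{2}$ applied to the positive excess $M-2$ resolves this, so any configuration strictly above the threshold escapes to infinity. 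The remaining work is merely routine identity verification and direction bookkeeping.
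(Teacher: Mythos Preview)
Your argument is correct, and it takes a genuinely different route from the paper's. The paper splits into the same two cases but handles them in the opposite order: it cites \cite{BIST1989} for case~(B) (two consecutive $v$-entries large) and gives a direct forward-iteration argument for case~(A), showing from \eqref{recurrence} that $|u_{N+1}|\geq (1+\delta)|u_N|$ and $|v_{N+1}|\geq (1+\delta)|v_N|$ whenever $|u_N|,|v_N|\geq 2+\delta$ and $|v_{N-1}|\leq 2$, so both sequences blow up geometrically in the forward direction. You instead reduce~(A) to~(B) via the Vieta relation $v_{n+1}+v_{n-1}=u_nv_n$ and then give a self-contained treatment of~(B) built on the identity $v_{n+1}v_{n-1}+v_nv_{n-2}=(v_n^2-1)(v_{n-1}^2-1)-1$, which---as you note---is independent of both $u_n$ and $V$. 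The paper's approach is shorter for case~(A) (two one-line inequalities) and always moves forward in $n$, but it outsources case~(B); your approach is fully self-contained and reduces everything to the $v$-sequence alone, at the cost of a two-sided growth argument in which the direction of escape is only fixed after the first step. Your observation that the excess $M-2$ (rather than $M$ itself) is amplified by the uniform factor $\tfrac32$ is exactly what is needed to rule out stagnation near the threshold, and your locking argument (that $|v_{n-1}|\leq M<|v_{n+1}|$ forces the next maximum to be $|v_{n+2}|$) is the right way to prevent the iteration from oscillating.
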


\begin{proof}
The case $|v_{N-1}| \geq 2$, $|v_N| \geq 2$ has already been treated in \cite{BIST1989}, so we shall only show that the sequence $(u_n, v_n , v_{n-1})$ is unbounded assuming that there exists an index $N$ such that
\begin{equation}\label{consecutive}
|v_{N-1}| \leq 2 \: , \: |v_N| \geq 2 + \delta \:, \: |u_N| \geq 2 + \delta \quad \text{with} \quad \delta > 0.
\end{equation} 
First of all we obtain, as $|v_N| \geq 2 + \delta$,
\begin{equation} \label{first}
\begin{array}{ll}
|u_{N+1}| & \geq |u_N(v_N^2 -1)| - |v_{N-1}v_N| \\
& \geq |u_N| (|v_N^2-1| - |v_N|) \\
& \geq |u_N| (|v_N|-1)^2 \\
& \geq |u_N|(1+\delta).
\end{array}
\end{equation} 
As for the sequence $v_n$, we get,  as $|u_N \geq 2 + \delta|$,
\begin{equation}\label{second}
\begin{array}{ll}
|v_{N+1} | & \geq |u_Nv_N| - |v_{N-1}| \\
& \geq |u_N| (|v_N| -1) \\
& \geq (2 + \delta)|v_N| - (2+\delta) \\
& \geq |v_N| (1 + \delta).
\end{array}
\end{equation}
Thus, since $|v_{N+1}|$ and $|u_{N+1}|$ are obviously larger or equal than $2 + \delta$, the bounds \eqref{first} and \eqref{second} can be iterated for all indices larger than $N$ to get 
\begin{equation}
|u_{N+k}| \geq |u_N|(1+\delta)^k \; , \; |v_{N+k}| \geq |v_N|(1 + \delta)^k,
\end{equation} which proves that $|u_k|, |v_k| \overset{k}{\longrightarrow} \infty$. 
\end{proof}
\begin{remark}
The result just proven can be generalized, in fact in \cite{LaurentThesis} it is formulated, in terms of Chebyshev polynomials, for renormalization maps associated to any quasi-periodic potential of the form \eqref{potential}.
\end{remark}

Since, by \eqref{shift1}, all the points in the non-wandering set of $T_V$ must fulfill the property mentioned above, we restrict our attention to the following set:
\begin{equation} \label{union}
R_V := \{(x,y,z) \in S_V \, | \, {\bf w} (x,y,z) \; \textrm{fullfils} \; \PP \},
\end{equation}
where we say that the table ${\bf w}(x,y,z)$ fulfills the property $\PP$, when in each of the "L-shaped" subtables described above, no two adjacent boxes contain entries with absolute value larger than 2. 
It turns out that $\Sigma_V \subset \bigcap_{n= -\infty}^{\infty} T^n_V(R_V)$ by writing the set $R_V$ as the disjoint union of 10 sets, namely $R_V = \bigcup_{i=1}^{10} R_i$: defining the intervals $L^- := (-\infty, -2]$ , $s := [-2,2]$, $L^+ := [2, \infty)$, $\astc := (-\infty, \infty) = L^- \cup s \cup L^+$, we define the following sets according to which interval the entries of ${\bf w}$ lie in:
\begin{equation}\label{Markov}
\begin{array}{ll}
R_1 = \code{\astc}{s}{\astc}{s}{L^+}{s}{\astc} & R_2 = \code{\astc}{s}{\astc}{s}{L^-}{s}{\astc} \vspace{4pt} \\
R_3 = \code{s}{s}{s}{L^-}{s}{L^+}{s} & R_4 = \code{s}{s}{s}{L^+}{s}{L^-}{s} \vspace{4pt} \\
R_5 = \code{\astc}{L^+}{\astc}{\astc}{s}{s}{\astc} & R_6 = \code{\astc}{L^-}{\astc}{\astc}{s}{s}{\astc} \vspace{4pt} \\
R_7 = \code{\astc}{L^+}{s}{\astc}{s}{L^+}{s} & R_8 = \code{\astc}{L^+}{s}{\astc}{s}{L^-}{s} \vspace{4pt} \\
R_9 = \code{\astc}{L^-}{s}{\astc}{s}{L^-}{s} & R_{10} = \code{\astc}{L^-}{s}{\astc}{s}{L^+}{s}
\end{array}
\end{equation}

\begin{figure}[!ht]
\centering
\includegraphics[angle=0, width=0.8 \textwidth]{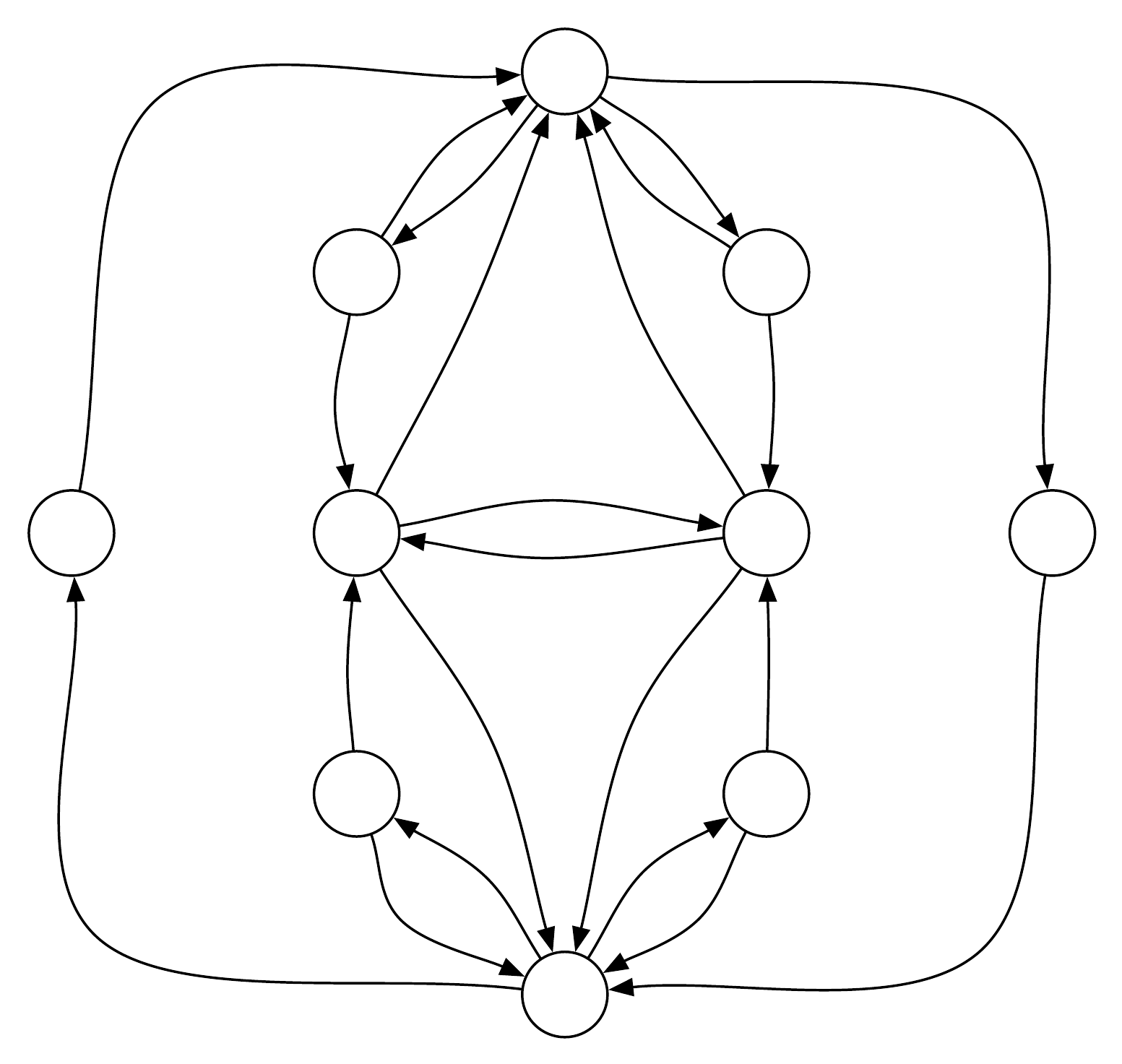}
\put(-149,253){$R_1$}
\put(-149,17){$R_2$}
\put(-203,200){$R_7$}
\put(-203,135){$R_6$}
\put(-203,68){$R_8$}
\put(-97,200){$R_{10}$}
\put(-97,135){$R_5$}
\put(-97,68){$R_9$}
\put(-25,135){$R_3$}
\put(-275,135){$R_4$}
\caption{The graph $G$: two sets $R_i, R_j$ are linked by an arrow $R_i \longrightarrow R_j$ if and only if $T(R_i) \bigcap R_j \neq \emptyset$}
\label{Graph}
\end{figure}

Let us now show that the sets defined by \eqref{Markov} exhaust the possible choices for our Markov partition and the graph $G$ in Figure \ref{Graph} is such that there is an arrow from the set $R_i$ to the set $R_j$ if and only if $T(R_i) \bigcap R_j \neq \emptyset$.

\begin{lemma}
The union of the ten sets defined by the tables \eqref{Markov} contains the set $R_V$ defined in \eqref{union}.
\end{lemma}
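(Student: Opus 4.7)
The plan is a case analysis on the middle L-subtable $(u_0,v_0,v_{-1})=(x,y,z)$ of $\mathbf{w}(x,y,z)$: first classify the point by which of the intervals $L^-, s, L^+$ each coordinate occupies, then verify that each admissible pattern forces the remaining four entries of $\mathbf{w}$ into the slots prescribed by the corresponding $R_i$.

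Property $\PP$ restricted to the middle L excludes the patterns $(L,L,s)$, $(s,L,L)$ and $(L,L,L)$ (where $L$ denotes either $L^+$ or $L^-$), leaving $(s,s,s)$, the three patterns with a single $L^\pm$ entry, and $(L^\pm,s,L^\pm)$. The configuration $(s,s,s)$ is incompatible with $S_V$ once $V$ is large: if $|x|,|y|,|z|\le 2$, then $x^2+y^2+z^2-xyz\le 12+8=20$, contradicting $x^2+y^2+z^2-xyz=V^2+4$ as soon as $V>4$. The remaining ten sign-patterns correspond bijectively to the middle L's of $R_1,\dots,R_{10}$.

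For the easy families $(s,L^\pm,s)$ (matched with $R_1,R_2$), $(L^\pm,s,s)$ (matched with $R_5,R_6$) and $(L^\pm,s,L^\pm)$ (matched with $R_7$--$R_{10}$), the extra constraints of the corresponding $R_i$ follow directly from $\PP$ applied to the left and right L-subtables: whenever $|v_0|>2$ or $|v_{-1}|>2$ the adjacent boxes are forced into $s$, while the remaining $\astc$ slots are exactly those that $\PP$ leaves unconstrained. The delicate cases are $(s,s,L^\pm)$, matched with $R_3,R_4$, since $R_3$ (resp.\ $R_4$) additionally demands $v_1\in L^-$ (resp.\ $L^+$) and $u_1\in s$, which is not implied by $\PP$ alone. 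Taking $(s,s,L^+)$ for concreteness and viewing the invariant as a quadratic in $v_{-1}$ with $|u_0|,|v_0|\le 2$, the discriminant is at least $4V^2$, so
\[
|v_{-1}|\ge V-2,
\]
hence for $V$ large we actually have $v_{-1}\ge V-2$. The recurrence \eqref{recurrence} then yields
\[
v_1 = u_0 v_0 - v_{-1} \le 4-(V-2) = 6-V,
\]
so $v_1\in L^-$ as soon as $V>8$. Since $|v_1|>2$, property $\PP$ applied to the left L forces $|u_1|\le 2$, i.e.\ $u_1\in s$; the right L gives $u_{-1},v_{-2}\in s$ from $|v_{-1}|>2$. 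The point thus lies in $R_3$, and the symmetric argument places $(s,s,L^-)$ in $R_4$.

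The main obstacle is precisely this last block: it is the only place where the large-$V$ hypothesis, the defining invariant of $S_V$ and property $\PP$ all enter together, and where one has to use the recurrence to extract the sign of $v_1$ before $\PP$ can close the argument. Everything else is bookkeeping with $\PP$.
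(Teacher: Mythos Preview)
Your proof is correct and follows essentially the same route as the paper: both argue by classifying the possible interval patterns of the central L-triple $(u_0,v_0,v_{-1})$ and then checking that the outer four entries of $\mathbf{w}$ are forced into the slots of the corresponding $R_i$. The paper phrases this as ``exclude every table not in the list,'' you phrase it as ``every admissible middle L lands in some $R_i$,'' but the content is the same.

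Your write-up is in fact more careful than the paper's on two points. First, you explicitly use the invariant $x^2+y^2+z^2-xyz=V^2+4$ to rule out the pattern $(s,s,s)$; the paper's proof does not address this case at all. Second, for the delicate case $(s,s,L^+)$ the paper simply asserts that $xy-z\in L^-$, which is not immediate (for $z$ only slightly above~$2$ one could have $xy-z\in s$); your argument supplies the missing step by extracting $|v_{-1}|\ge V-2$ from the invariant and then using the recurrence to get $v_1\le 6-V$. This is exactly what is needed to make the paper's claim rigorous.
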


\begin{proof}
First of all let us show that the table combinations that don't appear in \eqref{Markov} define points outside of $R_V$ or no point at all. In view of Lemma \ref{propertyP}, we can exclude any table combination in which any of the L-shaped subtables contains two adiacent $L$ symbols. To exclude the tables $\code{s}{s}{s}{L^+}{s}{L^+}{s}$\; , $\code{L^+}{s}{s}{s}{s}{L^+}{s}$ \; and \; $\code{L^-}{s}{s}{s}{s}{L^+}{s}$ it is enough to notice that if $(x,y,z) \in s \times s \times L^+$ then the lower-left entry $xy-z$ has to be in $L^-$ which excludes the three cases above. The same reasoning excludes $\code{s}{s}{s}{L^-}{s}{L^-}{s}$\; , $\code{L^+}{s}{s}{s}{s}{L^-}{s}$ \; and \; $\code{L^-}{s}{s}{s}{s}{L^-}{s}$ \; , since for the $(x,y,z)$ defined by such tables the only possibility for the lower-left entry would be $L^+$. We have then showed that no table need to be added to \eqref{Markov}, and in doing so we incidentally showed that $T(R_3)$ intersects only $R_2$ while $T(R_4)$ intersects only $R_1$.

All the arrows that appear in the graph $G$ can be deduced from \eqref{Markov}, however in order to show that the graph $G$ describes \emph{all} the possible intersections between the $R_i$'s and their images, we need to "exclude" certain table combinations that don't define any point: these are obtained by suitably replacing the symbol $\ast$ in some of the tables  \eqref{Markov} by $L^-$ , $L^+$ or $s$ \footnote{remember that $\astc := (-\infty, \infty) = L^- \cup s \cup L^+$}.   
In order to show that neither $R_5$ nor $R_6$ intersect their own images by $T$, we have to show that $\code{L^+}{L^+}{\astc}{s}{s}{\astc}{\astc}$ \; and $\code{L^-}{L^-}{\astc}{s}{s}{\astc}{\astc}$ \;  define the empty set; we only formulate the argument to deal with the first table, as the one needed to deal with the second table is identical. For the top-left corner entry, we have the expression
\begin{equation}
x(y^2 -1) - zy = y(xy-z) -x.
\end{equation}
According to the table that we are considering, $x \in L^+$, $y \in s$ and $(xy-z) \in s$, and we get a contradiction with $y(xy-z) -x \in L^+$.
An identical argument prevents $T(R_7 \bigcup R_8)$ to intersect $R_5$ and $T(R_9 \bigcup R_{10})$ to intersect $R_6$.
Next we deduce that $T(R_7) \bigcap R_2 = \emptyset$ by showing that the table $\code{s}{L^+}{s}{L^-}{s}{L^+}{s}$ defines the empty set: according to such table $zy-x \in s$, $x \in L^+$ and $z \in L^+$, which implies $y > 0$. But since $(xy-z) \in L^-$ has to hold as well, we get
\begin{equation}
y(xy -z) -x \in L^-,
\end{equation}
which gives a contradiction. The very same reasoning allows us to conclude that $T(R_8 \bigcup R_9)$ does not intersect $R_1$ and $T(R_{10})$ does not intersect $R_2$. This concludes our proof as all the other arrows (or lack thereof) in the graph $G$ can be deduced by simple inspection, using the tables in \eqref{Markov}.

\end{proof}

\section{Hyperbolic Dynamics}
\subsection{Definitions: Vertical and Horizontal bases}

In order to describe the set $\Sigma_V \subset \bigcap_{-\infty}^{\infty} T^{-n}_V(R_V)$ in terms of a horseshoe dynamics, we define the following "vertical" (in a sense that will be made clear shortly) basis \begin{equation}
\mathcal B_V = \{B_1, B_2, \ldots , B_{16} \}
\end{equation} 
consisting of the sets
\begin{equation} \label{verticalbasis}
\begin{array}{cc}
B_1 = R_1 \bigcap T^{-1} (R_7) \bigcap T^{-2} (R_1) & B_2 = R_1 \bigcap T^{-1} (R_{10}) \bigcap T^{-2} (R_1) \vspace{0.1cm} \\
B_3 = R_1 \bigcap T^{-1} (R_3)  & B_4 = R_1 \bigcap T^{-1} (R_{10}) \bigcap T^{-2} (R_5) \vspace{0.1cm} \\
B_5 = R_1 \bigcap T^{-1} (R_7) \bigcap T^{-2} (R_6) & B_6 = R_2 \bigcap T^{-1} (R_4)  \vspace{0.1cm} \\
B_7 = R_2 \bigcap T^{-1} (R_9) \bigcap T^{-2} (R_2) & B_8 = R_2 \bigcap T^{-1} (R_8 )\bigcap T^{-2} (R_2)  \vspace{0.1cm} \\
B_9 = R_2 \bigcap T^{-1} (R_9) \bigcap T^{-2} (R_5) & B_{10} = R_2 \bigcap T^{-1} (R_8) \bigcap T^{-2} (R_6)   \vspace{0.1cm} \\
B_{11} = R_5 \bigcap T^{-1} (R_1) & B_{12} = R_5 \bigcap T^{-1} (R_2) \vspace{0.1cm} \\
B_{13} = R_5 \bigcap T^{-1} (R_6) & B_{14} = R_6 \bigcap T^{-1} (R_1) \vspace{0.1cm} \\
B_{15} = R_6 \bigcap T^{-1} (R_2) & B_{16} = R_6 \bigcap T^{-1} (R_5). 
\end{array}
\end{equation}
Furthermore we define the map $\Phi : \mathcal B_V \rightarrow R_V$ acting in the following way:
\begin{equation} \label{Phi}
\Phi(x) = \left\{
\begin{array}{lcr}
T^2(x) & \text{if} & x \in \bigcup_{i=1}^{10} B_i \vspace{0.1cm} \\
T(x) & \text{if} & x \in \bigcup_{i=11}^{16} B_i
\end{array}
\right.
\end{equation}
The introduction of the basis $\mathcal B_V$ and of the map $\Phi$ turns out to be appropriate as each point $x \in \Sigma_V \subset R_V$ can be written as 
\begin{equation}
x = \bigcap_{i=-\infty}^{\infty} \Phi^{-i} (B_{s_i})
\end{equation}
with $ \mathbf{s} \in \mathcal A = \left \{ \{s_i\}_{i \in \Z} \in \{1, \ldots, 16\}^{\infty} \, | \, \mathcal B_{s_i s_{i+1}} = 1 \right \}$, where the matrix $\mathcal B$ is defined as follows: 
\begin{equation}
\mathcal B = \left(
\begin{array}{cccccccccccccccc}
\rowone \\
\rowone \\
\rowtwo \\
\rowfive \\
\rowsix \\
\rowone \\
\rowtwo \\
\rowtwo \\
\rowfive \\
\rowsix \\
\rowone \\
\rowtwo \\
\rowsix \\
\rowone \\
\rowtwo \\
\rowsix
\end{array}
\right),
\end{equation}
namely the coefficients of $\mathcal B$ are chosen so that $\mathcal B_{ij} = 1 \Leftrightarrow \Phi(B_i) \bigcap B_j \neq \emptyset $ (check the definitions \eqref{verticalbasis} and the relations between the $R_i$'s depicted in Fig. \ref{Graph}). Next we introduce an "horizontal" basis of set $\mathcal B^H = \{B^1, B^2, \ldots , B^{16}\}$ defined by the relation
\begin{equation}
B^i := \Phi(B_i),
\end{equation}
and, as the last ingredient, we state the following standard definition
\begin{definition}
Denote $\widetilde S_V = R_1 \bigcup R_2 \bigcup R_5 \bigcup R_6$. We shall call a curve $\gamma$ a $\mu$-\emph{vertical curve} if $\gamma: [-2,2] \rightarrow \widetilde S_V $ is such that 
\begin{equation}
\widetilde \pi \gamma(t) = (u(t) ,t)
\end{equation}
where $\widetilde \pi : \widetilde S_V \rightarrow [-2,2] \times [-2,2]$ sends $\widetilde S_V$ bijectively into the square in the following way:
\begin{equation}
\widetilde \pi(x,y,z) := \left \{
\begin{array}{lcr}
(x,z) & \text{if} & (x,y,z) \in R_1 \bigcup R_2 \\
(y,z) & \text{if} & (x,y,z) \in R_5 \bigcup R_6 
\end{array}
\right.
\end{equation}
and the continuous function $u: [-2,2] \rightarrow [-2,2]$ satisfies
\begin{equation}
|u(t_1) - u(t_2)| \leq \mu |t_1 - t_2|.
\end{equation}
Finally, a set $B \subset \widetilde S_V$ is a $\mu$-\emph{vertical strip} if there exist two $\mu$-vertical curves $\gamma_l$, $\gamma_r$ with $\widetilde \pi \gamma_l(t) = (u_l(t), t)$ and $\widetilde \pi \gamma_r(t) = (u_r(t),t) $ such that
\begin{equation}
\widetilde \pi B = \{ (s,t) \in [-2,2] \times [-2,2]  \, | \, \ u_r(t) \leq s \leq u_r(t) \},
\end{equation}
we shall call the quantity $\text{diam}(B) := \max_{t \in [-2,2]} |u_r(t) - u_l(t)|$ the \emph{diameter} of $B$. Analogously we can define $\mu$-\emph{horizontal} curves and $\mu$-horizontal strips.
\end{definition}

\subsection{Conditions for hyperbolicity and main result}

We claim that in order to prove that $\Sigma_V$ is an hyperbolic set homeomorphic to a Cantor set, we only have to check the two following conditions:
\begin{itemize}
\item[{\bf(a)}]
For all $t = 1 \ldots 16$, the mapping  $\Phi$ takes the vertical strip $B_t \in S_V$ diffeomorphically into the horizontal strip $B^t \in S^H$, i.e.
\begin{equation}
\Phi(B_t) = B^t
\end{equation}
sending the vertical (respectively horizontal) boundaries of $B_t$ into the vertical (respectively horizontal) boundaries of $B^t$.
\item[{\bf(b)}]
Denoting by $(\xi, \zeta, \eta)$ an element of the tangent space of $S_V$ at a point $(x,y,z)$, we define the bundle of cones 
\begin{equation} \label{cones+def}
\begin{array}{c}
S_1^+ =\{ |\eta| \leq \frac{1}{3}|\xi| \}\\
S_2^+ = \{ |\eta| \leq \frac{1}{3}|\zeta| \} 
\end{array}
\end{equation}
with
\begin{equation} \label{with}
\begin{array}{c}
\pi S_1^{+} = \left(  R_1 \bigcup R_2 \right) \bigcap \left( \bigcup_{B_t \in \mathcal B_V} B_t \right) \bigcap \left( \bigcup_{B^s \in \mathcal B^H} B^s\right )\\ 
\pi S_2^+ = \left(  R_5 \bigcup R_6 \right) \bigcap \left( \bigcup_{B_t \in \mathcal B_V} B_t  \right) \bigcap \left( \bigcup_{B^s \in \mathcal B^H} B^s\right ),
\end{array}
\end{equation}
where $\pi$ denotes the canonical projection of the bundles onto their base space. Such bundles are mapped by $d\Phi$ into themselves, i.e.
\begin{equation} \label{conespreservation}
d\Phi(S_1^+ \bigcup S_2^+) \subset S_1^+ \bigcup S_2^+.
\end{equation}
Moreover if $(\xi_0, \zeta_0, \eta_0) \in S_1^+ \bigcup S_2^+$ and $(\xi_1, \zeta_1, \eta_1)$ is its image under $d\Phi$, then
\begin{equation} \label{cones}
\left\{ \begin{array}{lcr}
|\xi_1| \geq 3 |\xi_0| & \text{if} & (\xi_0, \zeta_0, \eta_0) \in S_1^+ \: , \: (\xi_1, \zeta_1, \eta_1) \in S_1^+ \\
|\zeta_1| \geq 3 |\xi_0| & \text{if} &(\xi_0, \zeta_0, \eta_0) \in S_1^+ \: , \: (\xi_1, \zeta_1, \eta_1) \in S_2^+\\       
|\xi_1| \geq 3 |\zeta_0| & \text{if} &(\xi_0, \zeta_0, \eta_0) \in S_2^+ \: , \: (\xi_1, \zeta_1, \eta_1) \in S_1^+ \\
|\zeta_1| \geq 3 |\zeta_0| & \text{if} & (\xi_0, \zeta_0, \eta_0) \in S_2^+ \: , \: (\xi_1, \zeta_1, \eta_1) \in S_2^+         
\end{array}
\right.
\end{equation}
Similarly, defining the bundles
\begin{equation}\label{cones-def}
\begin{array}{c}
S_1^- =\{ |\xi| \leq \frac{1}{3}|\eta| \} \\
S_2^- =\{ |\zeta| \leq \frac{1}{3}|\eta| \}
\end{array}
\end{equation}
respectively over the same base space as $S_1^+$ and $S_2^+$ (i.e. $\pi S_{1,2}^+ = \pi S_{1,2}^-$), we have
\begin{equation} \label{cones-pres}
d\Phi^{-1}(S_1^- \bigcup S_2^-) \subset S_1^- \bigcup S_2^-,
\end{equation}
and if $(\xi_0, \zeta_0, \eta_0) \in S_1^- \bigcup S_2^-$  and $(\xi_1, \zeta_1, \eta_1)$ is its image under $d\Phi^{-1}$, then
\begin{equation} \label{cones-stretch}
|\eta_0 | \geq 3 |\eta_1|
\end{equation}
\end{itemize}

We then have the following:

\begin{proposition} \label{symbolicok}
The $\mathcal C^1$ map $\Phi$ defined in \eqref{Phi} satisfies condition {\bf(a)} and {\bf(b)}.
\end{proposition}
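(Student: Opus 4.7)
The plan is to reduce the proposition to a finite case analysis across the $16$ strips of $\mathcal B_V$, organized so that both (a) and (b) become explicit computations with the differential of $T$ on the surface $S_V$. Two uniform facts are used throughout. First, by the invariant $x^2+y^2+z^2-xyz=V^2+4$, whenever one coordinate of a point of $R_V$ lies in $L^\pm$ while the other two lie in $s=[-2,2]$, that large coordinate in fact has absolute value of order $V$ for $V$ large; this upgrades the qualitative labelling of the tables \eqref{Markov} to quantitative size estimates. Second, the defining property $\PP$ of $R_V$ ensures that in each L-shaped sub-table, the ``large'' entries never sit next to one another, so the dominant terms of the recurrence \eqref{recurrence} are unambiguous.

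For condition (a), I would use the parametrization $\widetilde\pi$ that flattens $\widetilde S_V$ onto $[-2,2]^2$. The definition \eqref{verticalbasis} presents each $B_t$ as an intersection of preimages of $R_j$'s whose boundaries, read in the $(s,t)$-coordinates of the square, are level sets of one coordinate; thus $B_t$ is visibly a vertical strip. One then writes $\Phi=T$ or $\Phi=T^2$ explicitly via \eqref{RG} and verifies that $\Phi|_{B_t}$ is injective with nowhere-vanishing Jacobian, computed from
\[
dT_{(x,y,z)} = \begin{pmatrix} y^2-1 & 2xy-z & -y \\ y & x & -1 \\ 0 & 1 & 0 \end{pmatrix}.
\]
Its restriction to the tangent plane of $S_V$ is non-degenerate on $R_V$ for $V$ large, by the size estimates of the previous paragraph. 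Tracking the faces ``coordinate $=\pm 2$'' of $B_t$ through $\Phi$ yields the required boundary correspondence with $B^t=\Phi(B_t)$.

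For condition (b), I would restrict $dT$ to the tangent plane of $S_V$, cut out by $(2x-yz)\,\xi + (2y-xz)\,\zeta + (2z-xy)\,\eta = 0$, and substitute the size estimates. On each strip one of $x,y,z$ is of order $V$ while the others are bounded by $2$, so the dominant entries of $dT$ can be read off and compared. For strips with $\Phi=T^2$, one multiplies two such matrices at the base and at the intermediate points and verifies that the product is dominated by the terms containing the large coordinate in the expanding direction; this gives expansion by a factor of order $V^2$, which exceeds $3$ once $V$ is large. The four cases of \eqref{cones} correspond to whether the source and target of $B_t$ lie in $R_1\cup R_2$ (expanding direction $\xi$) or $R_5\cup R_6$ (expanding direction $\zeta$). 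The stable-cone estimates \eqref{cones-pres}--\eqref{cones-stretch} follow either by a parallel computation on $d\Phi^{-1}$, or by exploiting the conjugation \eqref{conj}: after the cyclic permutation $\rho$, the stable cone $\{|\eta|\leq \tfrac{1}{3}|\xi|\}$ of $T^{-1}$ becomes an unstable cone of $T$, so the desired estimates are mirror images of those already established.

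The main obstacle will be the bookkeeping. The $10$ strips on which $\Phi=T^2$ are precisely those whose intermediate point lies in $R_3,R_4$ or $R_7,\ldots,R_{10}$, regions where a single application of $T$ need not expand the relevant cone; the composition of two differentials is engineered so that the overall map does expand, but verifying this uniformly requires checking that no cancellation occurs between the two factors. Once the dominant-term analysis is set up, each of the cone inequalities reduces to a polynomial inequality in the large and small coordinates that holds for $V$ sufficiently large, and the proposition follows by taking the maximum of the finitely many thresholds over the $16$ strips and the four cone cases.
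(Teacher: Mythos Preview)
Your outline follows the paper's strategy fairly closely, but there is a genuine gap in step (b) that you are glossing over with the phrase ``no cancellation occurs between the two factors''. The dominant-term analysis does \emph{not} reduce to polynomial inequalities that hold automatically for large $V$: the leading coefficients are polynomials in the \emph{bounded} coordinates, and these can vanish inside $[-2,2]$. Concretely, on $B_4\cup B_5\cup B_9\cup B_{10}$ the relevant expansion for $\zeta_1$ is governed by $y(xy-z)$, and a priori $xy-z$ may be $o(1)$ on these strips; the paper needs an ad hoc lemma, proved by parametrizing the strip boundaries by $E$ and using the relation $z(E)=x(E)y(E)+E$ with $E$ bounded away from $0$, to get $|xy-z|\gtrsim V^{-\alpha}$. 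Similarly, in the $S^-$ analysis the $(3,3)$ entry of $MT^{-2}$ has leading part $(5z^4-9z^2+2)y^2$, and for the $S^+$ analysis on $B_1,\ldots,B_3,B_6,\ldots,B_8$ with $|x|\le 1/V$ the leading term is $(3z^2-2)y^2$; in both cases one must use the precise location of the horizontal strips to exclude the zeros of these polynomials in $z$. None of this follows from the qualitative size information ``one coordinate is of order $V$, the others are in $[-2,2]$''. Your sketch of (a) needs to produce exactly these refined bounds on the small coordinates, and then feed them into (b).

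Your proposed shortcut for the stable cones via \eqref{conj} is also too optimistic. The conjugating map $\rho$ has order $3$, not $2$, so it does not swap the cone families $S^\pm$; it cyclically permutes the coordinate roles. The paper finds that this symmetry only matches four of the sixteen strips ($B_{11},B_{12},B_{14},B_{15}$ with their horizontal counterparts) and explicitly remarks that for the remaining ten no such argument is available; the $S^-$ estimates are obtained there by a direct computation with $MT^{-2}$, not by reflection from the $S^+$ case.
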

The latter Proposition directly implies our main result:
\begin{theorem} \label{main}
The $\mathcal C^1$ map $\Phi$ defined in \eqref{Phi} admits the shift $\sigma : \mathcal A \rightarrow \mathcal A$, where $\mathcal A := \left\{ {\bf s} \in \{1, \ldots , 16\}^{\infty} \: | \: \mathcal B_{s_i s_{i+1}} = 1 \right \}$, and
\begin{equation}
\sigma (\mathbf{s}) = \mathbf{s}' \quad \Longleftrightarrow \quad s'_i = s_{i+1}  \quad \forall \, i \in \Z.
\end{equation} 
Namely there exists a homeomorphism $\rho$ such that for all $ {\bf x} \in \mathcal D := \bigcap_{i = -\infty}^{\infty} \Phi^i \left (\Sigma_V \right )$ we get the conjugation relation
\begin{equation}
 \Phi \circ \rho ({\bf x})= \rho \circ \sigma ({\bf x}).
\end{equation}
\end{theorem}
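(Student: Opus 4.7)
Given Proposition~\ref{symbolicok}, Theorem~\ref{main} follows from the standard construction of a symbolic conjugacy for a Smale-type horseshoe with a Markov partition; the proof would have three steps. For any admissible finite word $(s_0,\dots,s_n)$, I would show by induction on $n$ that $V_n(\mathbf{s}):=\bigcap_{i=0}^{n}\Phi^{-i}(B_{s_i})$ is a $\tfrac{1}{3}$-vertical sub-strip of $B_{s_0}$ and, symmetrically, $H_n(\mathbf{s}):=\bigcap_{i=-n}^{0}\Phi^{-i}(B_{s_i})$ is a $\tfrac{1}{3}$-horizontal sub-strip. The base case is condition~{\bf(a)}: since $\Phi$ maps $B_{s_0}$ diffeomorphically onto the horizontal strip $B^{s_0}$, sending horizontal boundaries to horizontal boundaries, the intersection $B^{s_0} \cap B_{s_1}$ is a horizontal sub-strip of $B^{s_0}$ whenever $\mathcal{B}_{s_0 s_1}=1$, and hence $\Phi^{-1}(B_{s_1})\cap B_{s_0}$ is a vertical sub-strip of $B_{s_0}$. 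The inductive step uses~\eqref{cones-pres}: the tangent vectors of a $\tfrac{1}{3}$-vertical curve through $B_{s_0}$ lie in $S_1^-\cup S_2^-$, which is preserved by $d\Phi^{-1}$, so the pull-back of a $\tfrac{1}{3}$-vertical curve under $\Phi^{-1}$ remains $\tfrac{1}{3}$-vertical.

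Next, the expansion estimate~\eqref{cones} asserts that $d\Phi$ stretches vectors in the unstable cone bundle by a factor of at least $3$; equivalently $d\Phi^{-1}$ contracts horizontal widths by $1/3$, hence $\text{diam}(V_n(\mathbf{s}))\leq C\cdot 3^{-n}$, and~\eqref{cones-stretch} analogously yields $\text{diam}(H_n(\mathbf{s}))\leq C\cdot 3^{-n}$. For every $\mathbf{s}\in\mathcal{A}$, the family $\{V_n\cap H_n\}_n$ is a decreasing nested sequence of non-empty compact rectangles whose diameters tend to zero, so
\[
\rho(\mathbf{s}):=\bigcap_{i\in\mathbb{Z}}\Phi^{-i}(B_{s_i})
\]
is a single point in $\mathcal{D}$. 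The resulting map $\rho:\mathcal{A}\to\mathcal{D}$ is continuous by the same geometric shrinking, injective by the single-point property, and surjective because each $x\in\mathcal{D}$ has a well-defined itinerary $(s_i)$ with $\Phi^i(x)\in B_{s_i}$, the admissibility $\mathcal{B}_{s_i s_{i+1}}=1$ being automatic from $\Phi(B_{s_i})\cap B_{s_{i+1}}\neq\emptyset$. The conjugation $\Phi\circ\rho=\rho\circ\sigma$ is then tautological from the definition of $\rho$, and since $\mathcal{A}$ is compact, any continuous bijection onto $\mathcal{D}$ is automatically a homeomorphism.

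The main obstacle is the first step: verifying that the sixteen sets $B_i$ of~\eqref{verticalbasis} really are $\tfrac{1}{3}$-vertical strips with respect to the two coordinate charts provided by $\widetilde{\pi}$ on the four different components $R_1,R_2,R_5,R_6$, and that condition~{\bf(a)} really delivers a full Markov property (the image $B^i$ crosses each admissible target $B_j$ all the way through in the horizontal direction, with vertical and horizontal boundaries correctly matched). The subtlety is that $\widetilde{\pi}$ uses two different coordinate systems on $\widetilde{S}_V$, so when $\Phi(B_i)$ lands in a component of a different type than $B_i$ one must verify that the cone condition of~{\bf(b)} correctly translates into the strip geometry used by the horseshoe formalism. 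Once this geometric alignment is in place, Steps~2 and~3 are the textbook uniformly-hyperbolic argument.
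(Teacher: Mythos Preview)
Your proposal is correct and follows essentially the same route as the paper's own argument (sketched in Appendix~\ref{symbolicproof}, after Moser~\cite{Moser1973}): both show that conditions~{\bf(a)} and~{\bf(b)} imply an inductive strip-shrinking property (your $V_n$, $H_n$; the paper's condition $(\star)$), from which the single-point intersection and hence the conjugacy follow. The only minor discrepancy is the contraction rate: the paper derives a factor $\tfrac{1}{2}$ per step from the $\tfrac{1}{3}$-cone condition via a triangle-inequality argument, rather than your $3^{-n}$, but this is immaterial for the conclusion.
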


The proof of Theorem \ref{main} as a direct consequence of Proposition \ref{symbolicok} is a straightforward adaptation of an analogous result in \cite{Moser1973}. We refer the interested reader to the Appendix \ref{symbolicproof} for a sketch of the proof.

Let us proof Proposition \ref{symbolicok}

\begin{proof}[Proof (of Proposition \ref{symbolicok})]
We shall show how to check the two conditions for some sample strips, for the complete calculations the interested reader can check \cite{LaurentThesis}
\begin{itemize}
\item[{\bf(a)}]
Let us show that $B_6$ is a vertical strip which is sent by $\Phi$ into the horizontal strip $B^6$ so that boundaries go into boundaries, i.e. $\Phi(\partial B_6) = \partial B^6$. First of all we notice that $B_6 = R_1 \bigcap T^{-2}R_2$, hence the left boundary of $B_6$ is given by $\partial_L B_6 = R_1 \bigcap T^{-2} \partial_L R_2$, where $\partial_L R_2 = \{(-2, -E-V, E) \, , \, | E \in [-2,2] \}$ is the left boundary of $R_2$. By a straightforward calculation we get
 \begin{equation}
T^{-2}\partial_L R_2 = \{(x(E), y(E), z(E)) \, , \, E \in [-2,2]\},
\end{equation} 
where 
\begin{equation}
\left \{ \begin{array}{cl}
 x(E) & =  y(E)E+E(E+V)-2 \\
y(E) & =  (-E-V)(E^2-1)+2E \\
 z(E) & =   y(E)x(E) -E. 
 \end{array} \right.
\end{equation}
One can check that for all $V \geq 10$
\begin{equation} \label{xl13}
\left\{
\begin{array}{c}
 (x(\frac{1}{2V}) ,  z(\frac{1}{2V})) \in [-2,0] \times [-\infty, -2]\\
 (x(\frac{3}{2V}) ,  z(\frac{3}{2V})) \in  [0,2] \times [2, \infty] 
\end{array}\right. ,
\end{equation}
furthermore
\begin{equation}
V-2 <y(E) < V+2 \quad \text{for} \quad \frac{1}{2V} \leq E \leq \frac{3}{2V}  \label{yl13}.
\end{equation}
Thus, in view of the fact that in the considered interval $dz(E)/dx(E) > 1$, eqs \eqref{yl13} and \eqref{xl13} allow us to conclude that the segment 
\begin{equation}
\partial_L V_{13}  = \left\{(x(E), y(E), z(E)) \: | \: \frac{1}{2V} \leq E \leq \frac{2}{3}V  \right\} \, \bigcap \, R_1 
\end{equation} 
is a vertical curve.

Analogously we study the right boundary of $B_6$ which is given by $\partial_R B_6 = R_1 \bigcap T^{-2} \partial_R R_2$, where $\partial_R R_2 = \{(2, E-V, E) \, , \, | E \in [-2,2] \}$ is the right boundary of $R_2$. We get, for
 \begin{equation}
 \{(x(E), y(E), z(E)) \, , \, E \in [-2,2]\} = T^{-2}\partial_L R_2 
\end{equation}  
and all $V \geq 10$, 
\begin{equation} \label{xr13}
\left\{
\begin{array}{c}
(x(-\frac{3}{2V}) ,  z(-\frac{3}{2V})) \in  [-2,0] \times [-\infty, -2]  \\
 (x(-\frac{1}{2V}) ,  z(-\frac{1}{2V})) \in [0,2] \times [2, \infty]
 \end{array}\right. ,
\end{equation}
furthermore
\begin{equation}
V-2 <y(E) < V+2 \quad \text{for} \quad \frac{1}{2V} \leq E \leq \frac{3}{2V}  \label{yr13}.
\end{equation}
Thus, also in view of the fact that in the considered interval $dz(E)/dx(E) > 1$, eqs \eqref{yr13} and \eqref{xr13} allow us to conclude that the segment 
\begin{equation}
\partial_R B_6  = \left\{(x(E), y(E), z(E)) \: | \: -\frac{2}{3V} \leq E \leq -\frac{1}{2}V  \right\} \, \bigcap \, R_1 
\end{equation} 
is a vertical curve.

\begin{figure}[!ht]
\centering
\includegraphics[angle=0, width=0.6 \textwidth]{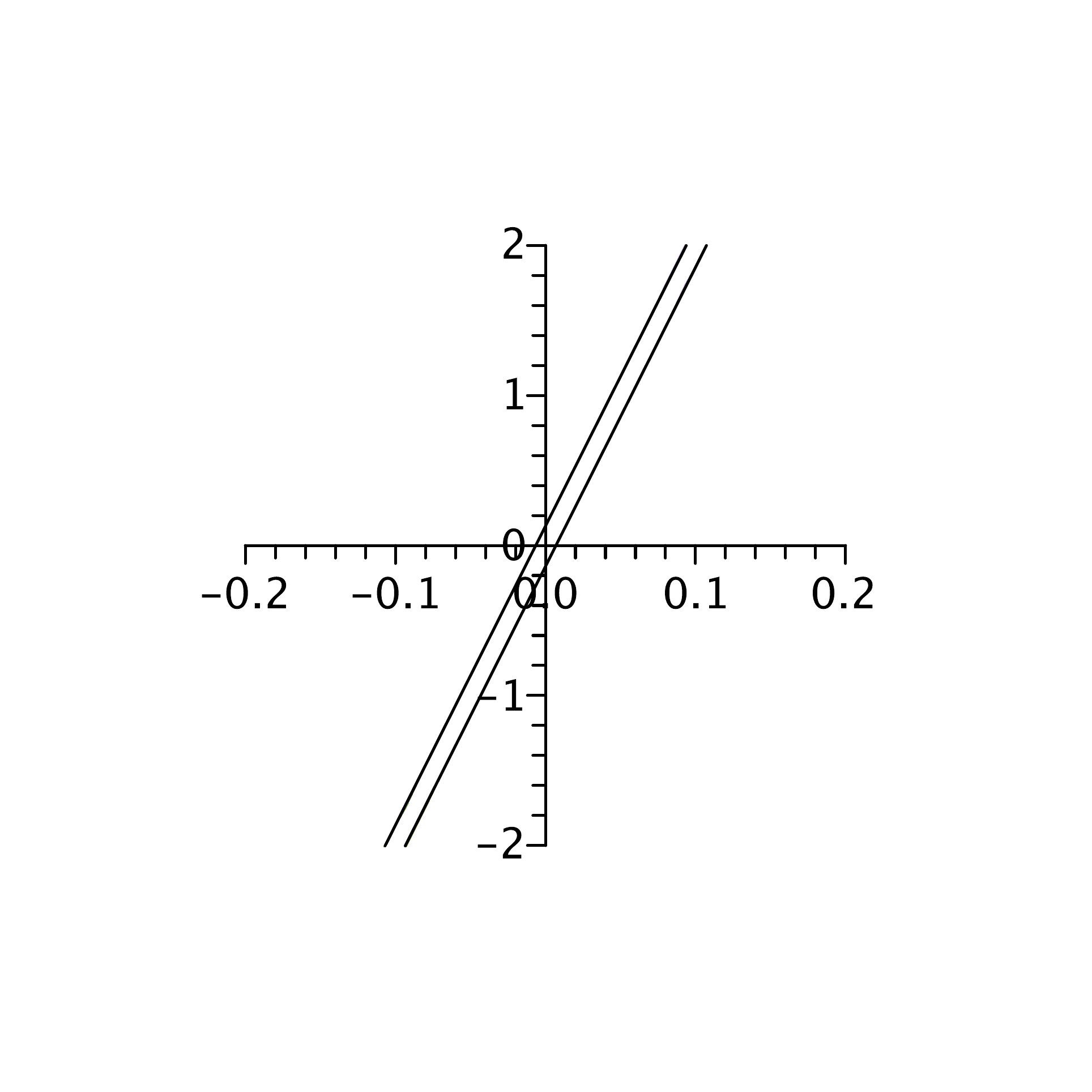}
\caption{The vertical strip $B_6$ on the $(x,z)$ plane for $V = 20$}
\label{Left13pic}
\end{figure}

In the same fashion we can prove that $B^6$ is an horizontal strip. First of all, we see that $B^6 = R_2 \bigcup T^2 R_1$. Using  $\partial_+$ and $\partial_-$ to denote upper and lower boundary, respectively, we get that $\partial_+ B^6 = \Phi^2 \partial_+ R_1$ is given by a curve $(x(E), y(E), z(E))$. Analogously to the reasoning we made above, we see that there exist $E_1 = E_1(V)$ and $E_2 = E_2(V)$, contained in the interval $[0, \frac{3}{V}]$ such that

\begin{equation} \label{B^6}
\left\{
\begin{array}{c}
(x(E_1) ,  z(E1) ) \in   [-\infty, -2] \times  \{ \frac{2}{V} \}  \\
 (x(E_2) ,  z(E_2)) \in [2, \infty] \times  \{-\frac{2}{V} \}
 \end{array}\right. ,
\end{equation}
and $-V-2 <y(E) < -V+2$ for $E \in [E_1, E_2]$. Since in the considered interval $dx(E)/dz(E) < -V$ the curve $(x(E), y(E), z(E))$ intersects $R_2$ in an horizontal curve.

\item[{\bf(b)}]

We shall first verify that, for $V$ large enough, the cones $S^-_{1,2}$ defined in \eqref{cones-def} are preserved by $d\Phi^{-1}$ as stated in \eqref{cones-pres} and stretched appropriately in the vertical direction as in condition \eqref{cones-stretch}.

First of all we notice that a tangent vector at $(x,y,z)$ is given by $\{(\xi, \zeta, \eta) \, , \, (\xi, \eta) \in \R^2  \}$ with $\zeta = \zeta(\xi, \eta)$ given by
\begin{equation} \label{tangspace}
(2x-yz)\xi + (2y-xz)\zeta + (2x-xy)\eta = 0.
\end{equation}
We compute the jacobian matrix of $T^{-1}$, denoting it $MT^{-1}$, and obtain
\begin{equation}
MT^{-1}(x,y,z) = \left(\begin{array}{ccc}-1 & z & y \\0 & 0 & 1 \\-z & z^2-1 & 2yz-x\end{array}\right).
\end{equation}
For the Jacobian matrix of $T^{-2}$, we compute
\begin{eqnarray}
MT^{-2}(x,y,z) & = &  MT^{-1}(T^{-1}(x,y,z))MT^{-1}(x,y,z) \\ 
&=& \left(\begin{array}{ccc} 1-z^2 & z(z^2-2) & 3yz^2-2xz-2y \\-z & z^2-1 & 2yz-x \\A & B & C\end{array}\right).
\end{eqnarray}
with
\begin{equation}
\begin{array}{l}
A = -2yz^4+2xz^3+4yz^2-2xz-y \\
B = 2yz^5 -2xz^4 -6yz^3 + 4xz^2 + 4yz -x \\
C = 5y^2z^4 -8xyz^3 + (3x^2 -9y^2)z^2 + 8xyz +2y^2 - x^2 -1.
\end{array}
\end{equation}
We shall only show how to treat the case of cones in $S^-_1$ based at points $(x,y,z) \in B^1\bigcup B^2 \bigcup B^3 \bigcup B^6 \bigcup B^7 \bigcup B^8$, the other cases being proved, \emph{mutatis mutandis}, in the same fashion. 

We denote an element in one of such cones as $(\xi_0, \zeta_0, \eta_0)$ and its image under $d\Phi^{-1}$ by $(\xi_1, \zeta_1, \eta_1)$, i.e. we consider the equation
\begin{equation}\label{mm}
d\Phi^{-1}(\xi_0, \zeta_0, \eta_0) = (\xi_1, \zeta_1, \eta_1).
\end{equation} 
By linearity we can assume that $\eta_0 = 1$ and $\xi_0 \in [-1/3 , 1/3]$, and we can prove the following easy lemma:
\begin{lemma}
For $V > 10$, one gets $\zeta_0 \in [-4,4]$. 
\end{lemma}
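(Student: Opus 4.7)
The plan is to use the tangent constraint \eqref{tangspace} (with the obvious correction of the third coefficient to $2z-xy$, which is what the gradient of the invariant actually gives) to write $\zeta_0$ as an explicit linear combination of $\xi_0$ and $\eta_0$, and then to invoke the invariant $x^2+y^2+z^2-xyz=V^2+4$ to force $|y|$ to be of order $V$ at any base point. The essential observation is that all the horizontal strips $B^1,\dots,B^8$ lie inside $R_1\cup R_2$, so the base point automatically satisfies $|x|\le 2$ and $|z|\le 2$, with the sign of $y$ fixed by which of the two regions contains the point.

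Granted that reduction, the bound is routine. Rearranging the tangent constraint,
\begin{equation*}
\zeta_0 \;=\; -\,\frac{(2x-yz)\,\xi_0+(2z-xy)\,\eta_0}{2y-xz}.
\end{equation*}
Using $|\xi_0|\le 1/3$, $|\eta_0|=1$ and $|x|,|z|\le 2$, one bounds the numerator by $\tfrac{4}{3}(2|y|+4)$ and the denominator from below by $2|y|-4$, yielding the clean inequality
\begin{equation*}
|\zeta_0|\;\le\;\frac{4(|y|+2)}{3(|y|-2)}.
\end{equation*}
This quantity is already $\le 4$ as soon as $|y|\ge 4$, so the whole lemma reduces to proving $|y|\ge 4$ on the relevant strips.

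To obtain that, I would read the invariant as a quadratic in $y$, namely $y^2-xz\,y+(x^2+z^2-V^2-4)=0$. The product of the roots is $x^2+z^2-V^2-4<0$ for $V>2$, so the two roots have opposite signs, and the correct branch is then singled out by $y\in L^+$ on $R_1$ and $y\in L^-$ on $R_2$. A uniform estimate of the discriminant for $(x,z)\in[-2,2]^2$ places the chosen root within $O(1/V)$ of $V+xz/2$, so $|y|\ge V-2-O(1/V)\ge 4$ whenever $V>10$. The only step that requires any care is this discriminant estimate and the matching branch selection; everything else is a one-line substitution, and the threshold $V>10$ is comfortably non-sharp.
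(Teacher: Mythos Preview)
Your proof is correct and follows essentially the same route as the paper: solve the tangent-plane relation for $\zeta_0$, bound numerator and denominator using $|x|,|z|\le 2$ and $|\xi_0|\le 1/3$, $|\eta_0|=1$, and invoke $|y|\sim V$ on $R_1\cup R_2$. The paper arrives at the equivalent fraction $\frac{4(2V+8)}{3(2V-8)}$ by directly inserting $V-2\le |y|\le V+2$ (a bound it has from the strip computations), whereas you extract $|y|\ge V-2-O(1/V)$ on the spot from the invariant via the quadratic-in-$y$ discriminant; this makes your version more self-contained but is not a different idea.
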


\begin{proof}
It is enough to show the Lemma for $(x,y,z) \in R_1$, the proof being identical in $R_2$. The relation \eqref{tangspace} yields
\begin{eqnarray}
\zeta_0 & = & \frac{(xy-2z)\eta_0 + (yz -2x)\xi_0}{2y-xz} \\
& \leq & \frac{4(2V+8)}{3(2V-8)},
\end{eqnarray}
and the last term is smaller than $4$ for $V > 10$. The inequality $\zeta_0 \geq -4$ is obtained in the identical way.
\end{proof}

We now give an asymptotic analysis in $V$: since $(x,y,z) \in R_1 \bigcup R_2$ the variables $x$ and $z$ vary in the interval $[-2,2]$, whereas $y$ grows linearly with $V$.

\begin{lemma} \label{orderV}
All the terms of the matrix $MT^{-2}$ are of order at most $V$ apart from $C$ which is of order $V^2$.
\end{lemma}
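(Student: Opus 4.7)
The proof plan is to reduce the lemma to a routine substitution of bounds. The key preliminary is to show that on the relevant domain $R_1 \cup R_2 \subset S_V$, the coordinate $y$ grows at most linearly in $V$, while $x$ and $z$ remain bounded; once this is established, inspection of each matrix entry (monomial by monomial) yields the stated orders of magnitude.

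First I would record the coordinate bounds. By the definitions of $R_1$ and $R_2$ in \eqref{Markov}, any point $(x,y,z) \in R_1 \cup R_2$ satisfies $x, z \in [-2,2]$ and lies on the invariant surface $S_V$. Viewing the defining equation $x^2 + y^2 + z^2 - xyz = V^2 + 4$ as a quadratic in $y$, one obtains
\begin{equation*}
y = \frac{xz \pm \sqrt{(xz)^2 - 4(x^2 + z^2 - 4 - V^2)}}{2} = \frac{xz \pm \sqrt{4V^2 + (x^2-4)(z^2-4)}}{2}.
\end{equation*}
Since $|x|, |z| \leq 2$ the correction term $(x^2-4)(z^2-4)$ lies in $[0,16]$, so the discriminant is bounded by $4V^2 + 16$ and hence $|y| \leq V + C$ for some absolute constant $C$ (for $V$ large). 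Thus on $R_1 \cup R_2$ one has $|x|, |z| = O(1)$ and $|y| = O(V)$.

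With these bounds in hand, the lemma is immediate from the explicit expressions given for the entries of $MT^{-2}$. The upper-left $2 \times 2$ block depends only on $z$, so its entries are $O(1)$; the entries $(1,3)$ and $(2,3)$ are linear in $y$ and so are $O(V)$. The lower row must be inspected term by term: in $A$ and $B$ every monomial has at most one factor of $y$ (the remaining factors being $x$, $z$, or constants, all $O(1)$), so $A, B = O(V)$. In $C$, on the other hand, the monomials $5y^2 z^4$, $-9y^2 z^2$, and $2y^2$ contribute terms quadratic in $y$, hence of size $O(V^2)$, while all other monomials are at most linear in $y$ and therefore $O(V)$. This yields the claimed orders.

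There is essentially no obstacle here beyond the coordinate bound $|y| = O(V)$; the only mild point is that in order to guarantee $|y| \leq V + O(1)$ (and not, say, $|y| \leq CV$ with an unwanted constant) one uses that the correction $(x^2-4)(z^2-4)$ is nonnegative on $[-2,2]^2$, so the discriminant differs from $4V^2$ by a quantity bounded independently of $V$. The rest of the argument is a direct reading off of degrees from the monomial expansions of $A$, $B$, and $C$.
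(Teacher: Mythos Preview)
Your argument correctly establishes the \emph{upper} bounds: with $|x|,|z|\le 2$ and $|y|=O(V)$ on $R_1\cup R_2$, every entry other than $C$ is at most linear in $y$ and hence $O(V)$, while $C$ is at most quadratic in $y$ and hence $O(V^2)$. That part is essentially the paper's ``trivial'' half.

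The gap is in the second half of the claim. The lemma asserts that $C$ \emph{is} of order $V^2$, meaning $|C|\gtrsim V^2$ as well; this lower bound is what the subsequent Corollary uses to conclude $|\eta_1|\ge 3$ and $|\xi_1|\le\tfrac13|\eta_1|$. Collecting the $y^2$-terms in $C$ gives
\[
C = (5z^4 - 9z^2 + 2)\,y^2 + O(V),
\]
and the polynomial $5z^4-9z^2+2$ has four real roots inside $[-2,2]$ (at $z^2=(9\pm\sqrt{41})/10$). So on $R_1\cup R_2$ alone the leading coefficient can vanish and $C$ can drop to $O(V)$. Your write-up lists the three quadratic monomials separately and implicitly treats them as each contributing order $V^2$, overlooking this cancellation.

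The paper closes this gap by invoking the further restriction that the base point lies in the horizontal strips $B^1\cup B^2\cup B^3\cup B^6\cup B^7\cup B^8$: on these strips one can bound $z$ away from the zeros of $5z^4-9z^2+2$ (the precise estimates are deferred to \cite{LaurentThesis}). You need to add an argument of this kind; without it the lower bound on $C$, and hence the cone-preservation conclusion that follows, is not justified.
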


\begin{proof}
The claim on the terms different from $C$ is trivial, as they are at most of degree one in $y$. As for $C$, it is of order $V^2$ provided the polynomial $5z^4 - 9z^2 + 2$ does not vanish. The latter condition can be checked by using the condition that $(x,y,z) \in  B^1\bigcup B^2 \bigcup B^3 \bigcup B^6 \bigcup B^7 \bigcup B^8$ (for the precise bounds that exclude the aforementioned polynomial to vanish see \cite{LaurentThesis}).
\end{proof}

We finally get the following corollary:

\begin{corollary}
For $V$ large enough we get \eqref{cones-pres} and  \eqref{cones-stretch} for cones in $S^-_1$ based at $ B^1\bigcup B^2 \bigcup B^3 \bigcup B^6 \bigcup B^7 \bigcup B^8$.
\end{corollary}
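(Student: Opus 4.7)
The plan is a direct asymptotic comparison in $V$, built on the two preparatory lemmas just stated. First I would normalize $\eta_0=1$ so that, by the cone condition defining $S_1^-$, $|\xi_0|\le 1/3$; the preceding lemma then furnishes the uniform bound $|\zeta_0|\le 4$ for $V>10$. Applying $MT^{-2}$ to $(\xi_0,\zeta_0,\eta_0)^T$ expresses $(\xi_1,\zeta_1,\eta_1)$ as explicit linear combinations of the nine entries of $MT^{-2}$, with scalar coefficients bounded in absolute value by $4$.

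Next I would exploit Lemma~\ref{orderV} to compare orders. Reading off the top two rows of $MT^{-2}$, both $\xi_1$ and $\zeta_1$ are sums of three terms, each of order at most $V$ (since $|x|,|z|\le 2$ and $y=O(V)$), hence $|\xi_1|,|\zeta_1|=O(V)$. For the bottom component I write $\eta_1 = A\xi_0+B\zeta_0+C\eta_0$: by Lemma~\ref{orderV} the terms $A\xi_0$ and $B\zeta_0$ contribute only $O(V)$, whereas the leading part of $C$ is $y^{2}(5z^4-9z^2+2)$, of order $V^2$ as soon as the polynomial factor is bounded away from zero on the base points under consideration. Under that hypothesis one obtains $|\eta_1|\ge c\,V^2$ for some $c>0$ and all $V$ sufficiently large.

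Both parts of the corollary then drop out by dividing. For the cone inclusion \eqref{cones-pres}, the ratios $|\xi_1|/|\eta_1|$ and $|\zeta_1|/|\eta_1|$ are both $O(1/V)$, hence bounded by $1/3$ for $V$ large, so $(\xi_1,\zeta_1,\eta_1)\in S_1^-\cap S_2^-\subset S_1^-\cup S_2^-$. The hyperbolic stretching \eqref{cones-stretch} follows from the same bounds: the ratio $|\eta_1|/|\eta_0|\ge cV^2$ exceeds $3$ by a wide margin for $V$ large, giving the required expansion in the $\eta$-direction under $d\Phi^{-1}$.

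The only genuine obstacle is the quantitative non-vanishing of the leading coefficient of $C$, namely a uniform lower bound $|5z^4-9z^2+2|\ge c_0>0$ for the $z$-values actually realized in $B^1\cup B^2\cup B^3\cup B^6\cup B^7\cup B^8$. This polynomial has roots at $z^{2}=(9\pm\sqrt{41})/10$, so one must verify that the $z$-coordinate in each of these horizontal strips stays outside a fixed neighborhood of those roots, uniformly in $V$. This is the only input coupling the asymptotic estimate to the precise combinatorics of the Markov partition, and it reduces to the explicit numerical ranges recorded in \cite{LaurentThesis}; once that bound is in hand, the argument adapts verbatim to the remaining cases (cones in $S_2^-$ based at the other strips, and both families of $S^+$ cones treated via $MT^{2}$), with the roles of $\xi,\zeta$ and $\eta$ interchanged.
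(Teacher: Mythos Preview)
Your argument is correct and follows the same route as the paper's proof: normalize $\eta_0=1$, invoke the bound $|\zeta_0|\le 4$, then read off from Lemma~\ref{orderV} that the first two components of the image are $O(V)$ while $\eta_1$ is genuinely of order $V^2$ thanks to the non-vanishing of $5z^4-9z^2+2$ on the relevant strips. The paper simply records $\xi_1=\mathcal O(V)$, $\eta_1=\mathcal O(V^2)$ and concludes; your write-up is a more detailed unpacking of exactly this computation (the extra bound on $\zeta_1$ is harmless but unnecessary here, since the image base point lies in $R_1\cup R_2$ and so only the $S_1^-$ condition $|\xi_1|\le\tfrac13|\eta_1|$ is needed).
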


\begin{proof}
By plugging the result of Lemma \ref{orderV} into equation \eqref{mm} we get
\begin{eqnarray}
\xi_1 & = & \mathcal O(V) \\
\eta_1 & = & \mathcal O(V^2),
\end{eqnarray} 
hence for $V$ large enough $|\xi_1| \leq \frac{1}{3} |\eta_1|$ and $|\eta_1| \geq 3$.
\end{proof}
%

We can now treat the bundles $S^+_{1,2}$. Once again we perform an asymptotic analysis for $V$ large. The preservation of the cones and the stretching condition \eqref{cones}  are more complicated  to check on such bundles than on the previous ones as we couldn't derive precise bounds on the location of the relative vertical strips. We overcomed this difficulty by making use of some ad hoc tricks. 
First of all, let us compute the Jacobian matrix of $T$ and $T^2$:
\begin{eqnarray}
MT & = & \left(\begin{array}{ccc}y^2-1 & 2xy-z & -y \\y & x & -1 \\0 & 1 & 0\end{array}\right) \\
MT^2 & = & \left(\begin{array}{ccc}A & B & C \\D & x^2(3y^2+1)-4xyz+z^2-1 & 2y(z-xy)+x \\y & x & -1\end{array}\right),
\end{eqnarray}
with
\begin{eqnarray}
A & = & 3x^2y^4 + (3y^2-1)z^2 + (4xy-6xy^3)z - (3x^2+2)y^2+1, \\
B & = & 4x^3y^3 - z^3 +6xyz^2 + (2x^2 + 2 -9x^2y^2)z - (2x^3 + 4x)y, \\ 
C & = & -3yz^2 + (6xy^2 - 2x)z -3x^2y^3 + (2x^2+2)y \\
D & = & 2x(y^3-y) + (1-2y^2)z
\end{eqnarray}

Once again we denote an element in a cone of $S^+_{1,2}$ as $(\xi_0, \zeta_0, \eta_0)$ and its image under $d\Phi$ by $(\xi_1, \zeta_1, \eta_1)$. We shall study the equation
\begin{equation}\label{vv}
d\Phi(\xi_0, \zeta_0, \eta_0) = (\xi_1, \zeta_1, \eta_1),
\end{equation} 
and check that it preserves the cones as in \eqref{conespreservation} and it fulfills \eqref{cones}.

In order to treat a first batch of cones we shall exploit a symmetry argument. In the Fibonacci case, a special symmetry allowed to avoid the analysis on the $S^+$-type of bundles altogether: in that case all the vertical strips could be seen as the image of the horizontal ones by the isometry that conjugated the renormalization map $T$ to its inverse $T^{-1}$. In our case a similar argument can be applied only to the strips $\{B_i \: | \:  i = 11 \ldots 16 \}$, which are the images of the horizontal strips $\{B^i\: | \: i = 11 \ldots 16\}$ by the isometry $\rho: (x,y,z) \mapsto (y,z,x)$: first of all we get the following:
\begin{lemma}
The following isometries hold:
\begin{equation}
\rho B^{12} = B_{14} \: , \: \rho B^{15} = B_{15} \: , \: \rho B^{11} = B_{11} \: , \: \rho B^{14} = B_{12}
\end{equation}
\end{lemma}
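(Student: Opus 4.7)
The key input I intend to use is the conjugation relation $\rho\circ T\circ \rho = T^{-1}$ from equation~\eqref{conj}, which I rewrite equivalently as
\[
\rho\circ T \;=\; T^{-1}\circ\rho^{-1}, \qquad \rho^{-1}\circ T^{-1} \;=\; T\circ\rho.
\]
Since for $i\in\{11,\ldots,16\}$ we have $\Phi = T$ and therefore $B^i = T(B_i)$, the first identity gives $\rho B^i = T^{-1}(\rho^{-1}(B_i))$, so the claim $\rho B^i = B_j$ is equivalent to $\rho^{-1}(B_i) = T(B_j) = B^j$. Writing each vertical strip as $B_i=R_{a_i}\cap T^{-1}(R_{b_i})$ and using the second identity, this further reduces to
\[
\rho^{-1}(B_i) \;=\; \rho^{-1}(R_{a_i}) \cap T\bigl(\rho(R_{b_i})\bigr),
\]
so each of the four claims will follow once I identify $\rho^{-1}(R_{a_i}) = R_{b_j}$ and $\rho(R_{b_i}) = R_{a_j}$ for the corresponding pair.

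My plan for this identification step exploits that $\rho$ cyclically permutes the coordinates while preserving sign: the position of the ``large'' coordinate shifts from slot $y$ (in $R_1, R_2$) to slot $x$ (in $R_5, R_6$), and the sign of the large entry is carried along. On the non-wandering part of $S_V$ this should give
\[
\rho(R_1)=R_5,\quad \rho(R_2)=R_6,\quad \rho^{-1}(R_5)=R_1,\quad \rho^{-1}(R_6)=R_2.
\]
Substituting these four equalities into the formula above yields
\[
\rho^{-1}(B_{11}) = R_1\cap T(R_5) = B^{11}, \qquad \rho^{-1}(B_{14}) = R_2\cap T(R_5) = B^{12},
\]
\[
\rho^{-1}(B_{12}) = R_1\cap T(R_6) = B^{14}, \qquad \rho^{-1}(B_{15}) = R_2\cap T(R_6) = B^{15},
\]
which rewrite exactly as the four claimed isometries $\rho B^{11}=B_{11}$, $\rho B^{12}=B_{14}$, $\rho B^{14}=B_{12}$, $\rho B^{15}=B_{15}$.

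The main obstacle is the second step, namely establishing that $\rho$ really does permute $\{R_1, R_2, R_5, R_6\}$ as indicated. The difficulty is the auxiliary constraint $xy-z\in s$ appearing in the definitions of $R_1$ and $R_2$: it does not transform, under the cyclic permutation of coordinates effected by $\rho$, into any single one of the defining constraints of $R_5$ or $R_6$. To overcome this I plan to invoke both the invariant $x^2+y^2+z^2-xyz=V^2+4$ and the property $\PP$ (forbidding two adjacent large entries in ${\bf w}$) to show that these auxiliary constraints are automatically satisfied at the points of $R_V$ whose $\rho$-image again lies in $R_V$. This reduces each of the four rectangle identities to a finite check on the sign patterns of the seven entries of ${\bf w}(x,y,z)$ and ${\bf w}(\rho(x,y,z))$, modulo the invariant.
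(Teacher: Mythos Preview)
Your approach coincides with the paper's. The paper also reduces the lemma to the action of $\rho$ on the Markov rectangles, asserting the six relations
\[
\rho R_1=R_5,\ \rho R_2=R_6,\ \rho R_3=R_1,\ \rho R_4=R_2,\ \rho R_5=R_3,\ \rho R_6=R_4,
\]
and then computing, for example, $\rho B^{12}=\rho R_2\cap \rho T(R_5)=R_6\cap T^{-1}\rho^{-1}(R_5)=R_6\cap T^{-1}R_1=B_{14}$ via the same conjugation $\rho T=T^{-1}\rho^{-1}$ that you use. Your reformulation $\rho B^i=B_j \Leftrightarrow \rho^{-1}(B_i)=B^j$ is just a rearrangement of that computation.

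The one point where you go beyond the paper is your observation that the $v_1=xy-z$ constraint in the tables for $R_1,R_2$ does not match any constraint of $R_5,R_6$ under $\rho$, so the rectangle identities $\rho R_i=R_j$ are not literally set equalities without further argument. The paper simply states them; you are right that strictly speaking one needs either to invoke the invariant surface (for large $V$ the ``large'' coordinate is forced, so the extra $s$-constraint comes for free) or to check that on the intersections actually used the extra constraint is automatically satisfied. This is a legitimate refinement rather than a different method, and your plan to handle it via the invariant and property~$\PP$ is the natural one.
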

\begin{proof}
First of all let us observe that $\rho$ acts on the elements of the Markov partitions in the following way:
\begin{equation}
\begin{array}{ccc}
\rho R_1 = R_5 , & \rho R_2 = R_6 , & \rho R_3 = R_1 \\
\rho R_4 = R_2, & \rho R_5 = R_3, & \rho R_6 = R_4,
\end{array}
\end{equation}
from which we compute for instance
\begin{eqnarray}
\rho B^{12} & = & \rho \left( R_2 \bigcap T(R_5) \right) \\
&=&\rho R_2 \bigcap \rho T(R_5) \\
& = & R_6 \bigcap T^{-1}\rho^{-1} (R_5) \\
& =& R_6 \bigcap T^{-1}R_1 \\
&=& B_{14}
\end{eqnarray}
which proves the first equality of the claim. The other equalities follows exactly in the same way.
\end{proof}
Now, since $\rho(x,y,z) = (y,z,x)$, from the estimate we were able to recover for the variable $z$ in the computation of the horizontal strips (see again \cite{LaurentThesis}) we can get estimates on the variable $y$ on the four vertical strips under examination, precisely $y \in [-1-\frac{3}{V}, -1 + \frac{3}{V}] \cup [1-\frac{3}{V}, 1 + \frac{3}{V}]$. Then we get
\begin{lemma}
For the cones of $S^+_{2}$ based inside the set $B_{11} \bigcup B_{12} \bigcup B_{14} \bigcup B_{15}$ we have the following:
\begin{eqnarray}
\eta_1 &=&  1 \\
\xi_1 & = & \mathcal O(V)
\end{eqnarray}
\end{lemma}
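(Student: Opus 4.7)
The plan is to exploit the special form of the Jacobian $MT$ already computed in the excerpt, whose third row is $(0,1,0)$. For a tangent vector $(\xi_0,\zeta_0,\eta_0)\in S_2^+$ based at a point of $B_{11}\cup B_{12}\cup B_{14}\cup B_{15}$, on which $\Phi=T$, this row gives $\eta_1=\zeta_0$ outright. Projectively normalising by $\zeta_0=1$ makes the first assertion $\eta_1=1$ immediate and reduces the cone condition $|\eta_0|\le\tfrac{1}{3}|\zeta_0|$ to $\eta_0\in[-1/3,1/3]$.

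It then remains to estimate
\begin{equation}
\xi_1=(y^2-1)\xi_0+(2xy-z)-y\eta_0
\end{equation}
in powers of $V$. On the strips under consideration, the previous lemma gives $y\in[-1-3/V,-1+3/V]\cup[1-3/V,1+3/V]$, hence $y=\mathcal O(1)$ and $y^2-1=\mathcal O(1/V)$. The Markov constraint inside $R_5\cup R_6$ gives $z\in[-2,2]$. Rewriting the invariant $x^2+y^2+z^2-xyz=V^2+4$ as $x(x-yz)=V^2+\mathcal O(1)$ and using $|x-yz|\le |x|+2$ forces $|x|=V+\mathcal O(1)$, and in particular $x-yz$ is itself of order $V$.

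Next I would bound the remaining free parameter $\xi_0$ by using the tangent-space relation $(2x-yz)\xi_0+(2y-xz)\zeta_0+(2z-xy)\eta_0=0$ arising from $(x,y,z)\in S_V$. With $\zeta_0=1$ and the orders $|x|=\Theta(V)$, $|y|,|z|=\mathcal O(1)$, $|\eta_0|\le 1/3$, both the numerator $(2y-xz)+(2z-xy)\eta_0$ and the denominator $2x-yz$ are $\Theta(V)$, so $\xi_0=\mathcal O(1)$. Substituting back, the three summands of $\xi_1$ are $\mathcal O(1/V)$, $\mathcal O(V)$ and $\mathcal O(1)$, respectively, yielding $\xi_1=\mathcal O(V)$.

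The step I expect to be the main obstacle is the quantitative verification that the denominators $2x-yz$ in the tangent constraint and $x-yz$ in the invariant factorisation really are $\Theta(V)$ and not $o(V)$. This is what rules out an uncontrolled blow-up of $\xi_0$ that would dominate the middle term of $\xi_1$; it rests on combining the Markov-box bound $|x|\ge 2$ available in $R_5\cup R_6$ with the refined estimate $|y|\approx 1$ inherited from the horizontal-strip analysis, so that $|yz|\le 2+\mathcal O(1/V)$ is genuinely negligible compared to $|x|\sim V$. Once these lower bounds are in place, the remaining estimates reduce to tracking powers of $V$.
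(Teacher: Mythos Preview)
Your argument is correct and follows the same route as the paper: apply the third row of $MT$ to get $\eta_1=\zeta_0$, normalise $\zeta_0=1$, and identify $2xy$ as the dominant $\mathcal O(V)$ contribution to $\xi_1$. You are in fact more careful than the paper's own two-line proof, which simply asserts that $2xy$ dominates without explicitly bounding $\xi_0$ via the tangent-space relation or verifying $|x|=\Theta(V)$ from the invariant surface equation.
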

\begin{proof}
It is trivial to see that the first equality holds. The multiplication of the cone $S^+_2$ with the matrix $MT$ gives $2xy$ as the dominant term in $\xi_1$, which, inside the set considered in the claim of the Lemma, is of order $V$.
\end{proof}
As we have previously checked, the above Lemma implies the desired result for the cones we are considering.
The estimates for the cones based inside the set  $B_{13} \bigcup B_{16}$ is equally simple: a direct computation leads to the estimate $\zeta_1 = \mathcal O(V)$, $\eta_1 = 1$ which in turns implies that the cones are preserved and eq. \eqref{cones}.

For the remaining 10 strips we couldn't deduce any symmetry argument of the kind explained above. Such missing symmetry is probably due to the lack of a conjugation isometry between $T$ and $T^{-1}$. We start by treating the cones of $S^+_1$ based inside the set $B_1 \bigcup B_2 \bigcup B_3 \bigcup B_6 \bigcup B_7 \bigcup B_8 $. Let us recall that we have to prove that the cone $|\eta_0| \leq |\xi_0|/3$ is send by $MT^2$ on the cone $|\eta_1| \leq |\xi_1|/3$ with the property $|\xi_1| \geq 3 |\xi_0|$.
Since we are in $R_1$ and $R_2$ the variable $y$ is the one of order $V$. The multiplication of $MT^2$ by the triplet $(\xi_0, \zeta_0, \eta_0)$ gives
\begin{equation}
\eta_1 = \mathcal O(V).
\end{equation}
For $\xi_1$ we have the following estimate
\begin{lemma}
For $\xi_1$ the following holds
\begin{equation}
\xi_1 = \mathcal O (V^2).
\end{equation}
\end{lemma}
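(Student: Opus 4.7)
The plan is to compute $\xi_1 = A\xi_0 + B\zeta_0 + C\eta_0$ under the normalization $|\xi_0| = 1$ and $|\eta_0| \leq 1/3$, and to bound each summand by its order in $V$. The decisive ingredient is the sharper \emph{a priori} bound $|x| = \mathcal{O}(1/V)$ on each of the six vertical strips under consideration. Once this is in hand, the required estimate $\xi_1 = \mathcal{O}(V^2)$ follows from a direct polynomial inspection, without the need for any high-order cancellation between $A$, $B\zeta_0$, $C\eta_0$.

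To establish $|x| = \mathcal{O}(1/V)$, I would inspect the tables \eqref{Markov} for the regions $R_7, R_{10}, R_3, R_4, R_9, R_8$ appearing in the definitions \eqref{verticalbasis} of $B_1, B_2, B_3, B_6, B_7, B_8$. In every one of these six tables, the central entry of the bottom row, which is precisely the $y$-coordinate of $T(x,y,z)$, belongs to $s = [-2,2]$. Since each of the six strip definitions requires $T(x,y,z) \in R_j$ for one of these $R_j$, one obtains $|xy-z| \leq 2$; combined with $|z| \leq 2$ inherited from $R_1 \cup R_2$, this yields $|xy| \leq 4$. The invariant relation $x^2 + y^2 + z^2 - xyz = V^2 + 4$ together with $|x|, |z| \leq 2$ forces $|y| \geq V - C$ for $V$ large, and therefore $|x| \leq 4/|y| = \mathcal{O}(1/V)$.

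With the size orders $|x| = \mathcal{O}(1/V)$, $|y| = \mathcal{O}(V)$, $|z| = \mathcal{O}(1)$ secured, the remaining task is a termwise inspection of the explicit polynomials. The natural groupings $x^2 y^4 = (xy)^2 y^2 = \mathcal{O}(V^2)$, $xy^3 z = (xy)y^2 z = \mathcal{O}(V^2)$, $x^3 y^3 = (xy)^3 = \mathcal{O}(1)$, $x^2 y^2 z = (xy)^2 z = \mathcal{O}(1)$, $x^2 y^3 = (xy)^2 y = \mathcal{O}(V)$, etc., show monomial by monomial that $A = \mathcal{O}(V^2)$, $B = \mathcal{O}(1)$, $C = \mathcal{O}(V)$. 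Using the bound $\zeta_0 = \mathcal{O}(1)$ on $R_1 \cup R_2$, which holds by an argument identical to the one used in the preceding $S_1^-$ analysis (expressing $\zeta_0$ via the tangent relation \eqref{tangspace} and comparing the $\mathcal{O}(V)$ numerator and denominator), one obtains $\xi_1 = A\xi_0 + B\zeta_0 + C\eta_0 = \mathcal{O}(V^2) + \mathcal{O}(1) + \mathcal{O}(V) = \mathcal{O}(V^2)$.

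The main obstacle is purely the derivation of the strip bound $|x| = \mathcal{O}(1/V)$: \emph{a priori} one might expect that each of the six strips requires separate treatment, but the observation that the middle bottom entry of all six relevant tables is $s$ makes the argument uniform, so no case-by-case bookkeeping is required. Once this bound is in hand, the polynomial estimates reduce to counting monomial weights in the variables $xy$, $y$, and $z$, and no subtle cancellation between leading $V^4$ or $V^3$ contributions needs to be engineered.
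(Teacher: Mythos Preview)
Your argument establishes the wrong inequality. In this paper the symbol $\mathcal{O}(V^2)$ is being used informally to mean ``of exact order $V^2$'', and the operative content of the lemma is the \emph{lower} bound $|\xi_1|\gtrsim V^2$: the sentence immediately following the lemma invokes it to deduce the cone condition $|\eta_1|\le \tfrac13|\xi_1|$ and the stretching $|\xi_1|\ge 3|\xi_0|$, both of which require $|\xi_1|$ to be large. Since it was already noted that $\eta_1=\mathcal{O}(V)$ (upper bound), an upper estimate $|\xi_1|\le CV^2$ is useless for either conclusion.

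Your computation bounds each monomial of $A$, $B$, $C$ from above and adds; this cannot produce a lower bound because it does not rule out cancellation among the several $V^2$-order contributions to $A\xi_0$. Indeed, grouping the leading terms under your own hypothesis $|xy|\le 4$ gives
\[
A \;=\; y^2\bigl(3(xy-z)^2-2\bigr)\;+\;O(1),
\]
so the real obstruction is whether the coefficient $3(xy-z)^2-2$ can vanish on the strips in question. The paper confronts exactly this issue: in the regime $|x|\le 1/V$ it identifies $(3z^2-2)y^2$ as the dominant term and then uses the restriction \eqref{with} to the intersection with horizontal strips in $R_1\cup R_2$ to argue that $3z^2-2$ stays bounded away from zero there. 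You never address this non-vanishing, and without it the $V^2$ terms could in principle cancel down to $O(V)$. Your observation that $|x|=\mathcal{O}(1/V)$ uniformly on $B_1\cup B_2\cup B_3\cup B_6\cup B_7\cup B_8$ is correct and genuinely useful---it collapses the paper's two-case dichotomy at $|x|=1/V$ into a single regime---but by itself it only feeds an upper-bound calculation and does not touch the point on which the lemma actually turns.
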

\begin{proof}
For $x \geq \frac{1}{V}$ then the dominant term in the expression for $\xi_1$ is $x^2y^4$, which is $\mathcal O(V^2)$. For $x \leq \frac{1}{V}$ the dominant term becomes $(3z^2-2)y^2$ which is one of the terms in $A$. Since we consider only to the cones belonging to the intersections of vertical and horizontal strips (see eq. \eqref{with} in the definition of condition {\bf (b)}), we notice that $(3z^2 -2)$ never vanishes for the values of $(x,y,z)$ inside any horizontal strip contained in $R_1 \bigcup R_2$, hence
\begin{equation}
\xi_1 = \mathcal O(V^2).
\end{equation}  
\end{proof}
The last Lemma proves, in the usual fashion, the preservation of the relative cones and eq. \eqref{cones}.

The last cases left to study are the cones on the sets $B_4 \bigcup B_5 \bigcup B_9 \bigcup B_{10}$. We shall once again consider equation \eqref{vv} to check that it preserves the cones and that  $|\zeta_1| \geq 3 |\xi_0|$. It is straightforward to get 
\begin{equation}
\eta_1 = V + \mathcal O(1).
\end{equation}
For $|\zeta_1|$ we need to do some more work. We shall need the following Lemma which claims that in the cases under consideration the variables $x$ and $z$ can't be both small at the same time.

\begin{lemma}
The set $B_4 \bigcup B_5 \bigcup B_9 \bigcup B_{10}$ does not intersect the two vertical bands defined, respectively, in $R_1$ and $R_2$ by the same equation (thus by taking the two different branches of $S_V$ with respect to $y$):
\begin{equation} \label{vertforbid}
\frac{z-1/V}{V-2} \leq x \leq \frac{z+ 1/V}{V-2} \quad \text{for} \quad z \in [-2,2].
\end{equation}
This implies that 
$$
|xy-z | \gtrsim \frac{1}{V^\alpha},  \alpha>0.
$$

\end{lemma}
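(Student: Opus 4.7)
The plan is to extract a sharp quantitative lower bound on $v_1:=xy-z$ by exploiting that $T^{2}(x,y,z)\in R_{5}\cup R_{6}$ for every $(x,y,z)\in B_{4}\cup B_{5}\cup B_{9}\cup B_{10}$. The key observation is that both $R_5$ and $R_6$ force the central entry of the table of $T^2(x,y,z)$, i.e.\ its second coordinate, to lie in $s=[-2,2]$, and computing this coordinate explicitly will constrain $v_1$ very tightly.

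First I would carry out the explicit computation of $T^{2}$. From $T(x,y,z)=(u_{1},v_{1},y)$ with $u_{1}=v_{1}y-x$ one finds
$$T^{2}(x,y,z)=\bigl(u_{1}(v_{1}^{2}-1)-yv_{1},\; y(v_{1}^{2}-1)-xv_{1},\; v_{1}\bigr).$$
Next I would combine the constraint $|y(v_{1}^{2}-1)-xv_{1}|\le 2$ with $|xv_{1}|\le 4$ and with the lower bound $|y|\ge V-2-O(1/V)$ that follows from the invariant $x^{2}+y^{2}+z^{2}-xyz=V^{2}+4$ restricted to $|x|,|z|\le 2$. These together force
$$|v_{1}^{2}-1|\le\frac{6}{|y|}=O\!\Bigl(\tfrac1V\Bigr),$$
so $|v_{1}|$ lies within $O(1/V)$ of $1$, in particular is bounded below by $1/2$ for $V$ sufficiently large. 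This already gives $|xy-z|\gtrsim 1$, which trivially implies the quantitative statement $|xy-z|\gtrsim 1/V^{\alpha}$ for every $\alpha>0$.

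To deduce that the two vertical bands are avoided, I would substitute the band parametrization $x=z/(V-2)+\eta$ with $|\eta|\le 1/(V(V-2))$ into $v_{1}=xy-z$ and expand $y$ to the needed order via the invariant. In the $R_{1}$-branch, $y=V+O(1/V)$ gives $v_{1}=(x(V-2)-z)+x(y-V+2)=O(1/V)$ on the band, which flatly contradicts the near-$1$ size forced above. Hence the band in $R_{1}$ does not meet $B_{4}\cup B_{5}$.

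The hard part will be the analogous step in $R_{2}$. There $y=-V+O(1/V)$, so the leading behaviour on the band is $v_{1}\approx -2z$, and the condition $|v_{1}|\approx 1$ is a~priori compatible with the band near $|z|\approx 1/2$. To rule these candidate intersections out I expect to push the expansion of $(T^{2})_{2}$ to the next order, using the refined expansion $y=-V-(2-z^{2})/V+O(1/V^{2})$ and the $\eta$-dependence of the form $v_{1}-(-2z)\sim -\eta V+1/V$, and to check that for every admissible $(\eta,z)$ inside the band the resulting $O(1)$ correction forces $(T^{2})_{2}$ strictly outside $[-2,2]$. This bookkeeping, performed in tandem with the sign constraint on the first coordinate $(T^{2})_{1}$ that distinguishes $R_{5}$ from $R_{6}$, is the main computational obstacle of the argument.
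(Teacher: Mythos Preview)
Your argument for the quantitative bound $|xy-z|\gtrsim 1$ is correct and is in fact sharper and cleaner than what the paper does. The paper parametrizes only the \emph{boundary} curves of $B_4,B_5,B_9,B_{10}$ by a real parameter $E$ (coming from pulling back the boundary of $R_5,R_6$ under $T^{2}$), records the relation $z(E)=x(E)y(E)+E$ so that $xy-z=-E$ on the edge, asserts (by a computation) that $E\in[-1-\tfrac{2}{V},-1+\tfrac{2}{V}]\cup[1-\tfrac{2}{V},1+\tfrac{2}{V}]$, and then declares that it suffices to check the edges. You instead use directly that $T^{2}(B_4\cup B_5\cup B_9\cup B_{10})\subset R_5\cup R_6$, read off the constraint $|(T^{2})_{y}|\le 2$, and obtain $|v_1^{2}-1|=O(1/V)$ on the \emph{entire} strip at once. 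This is the same $|E|\approx 1$ information, but obtained uniformly and without the ``it suffices to check on the edge'' reduction. Since the only use of the lemma downstream is the lower bound on $|xy-z|$ in the $\zeta_1$-estimate, your argument already delivers everything the paper needs from this lemma.

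Where your proposal genuinely falls short is the band-avoidance statement in $R_2$, and the reason is structural: you are trying to deduce the first assertion of the lemma from the second, whereas the paper does the opposite. In $R_1$ this reversal happens to work, because on the band one has $v_1=O(1/V)$, contradicting $|v_1|\approx 1$. In $R_2$ the reversal cannot work without further input: the constraint $|v_1|\approx 1$ is the \emph{only} information you have extracted, and as you correctly observe, on the $R_2$-band $v_1\approx -2z$, so points with $|z|\approx\tfrac12$ satisfy both conditions simultaneously. No amount of ``pushing the expansion to the next order'' in $(T^{2})_{y}$ will help, because that expansion encodes nothing beyond the single scalar constraint $|v_1^{2}-1|\le 6/|y|$ you have already used. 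The paper's route avoids this trap: on the boundary curve one has the \emph{explicit} parametrization $x=x(E),\,z=z(E)$ (a one-parameter family, not a two-parameter region), and the check that $x(E)=(z(E)-E)/y(E)$ misses the band $|x(V-2)-z|\le 1/V$ is a one-variable computation that uses the specific functional dependence of $z$ on $E$, not merely the size of $E$. If you want to recover the full lemma statement, the cleanest fix is to adopt that boundary parametrization for the $R_2$ case; alternatively, simply note that the band-avoidance clause is not used anywhere once you have the direct bound $|xy-z|\gtrsim 1$.
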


\begin{proof}
It suffices to show that this property is true on the edge of the four vertical strips that  we are considering. Each point of the edge of $B_4 \bigcup B_5 \bigcup B_9 \bigcup B_{10}$ can be parametrized with respect to the variable $E$, and one has the relation:
\begin{equation}\label{relbord}
z(E)=x(E)y(E)+E.
\end{equation}
 For $(x,y,z) \in B_4 \bigcup B_5 \bigcup B_9 \bigcup B_{10}$ it can been shown by a computation that the parameter $E$ belongs to   $[-1-\frac{2}{V}, -1+\frac{2}{V}]\cup [1-\frac{2}{V},1+\frac{2}{V}]$.

Using (\ref{relbord}), one has $x=\frac{z-E}{y}$ which does not belong to the vertical strip defined in \ref{vertforbid}. 

For the second part, it easy to see that $xy-z \in [-\frac{1}{V},\frac{1}{V}]$ implies $x$ to be in the vertical strip defined in (\ref{vertforbid}). One can then redo the proof replacing $\frac{1}{V}$ in (\ref{vertforbid}) by $\frac{c}{V^\alpha}$ for any positive constant $c$ and $\alpha$. This yields to 
$$
|xy-z | \gtrsim \frac{1}{V^\alpha},  \alpha>0.
$$
\end{proof}

With this lemma, we can now finish the proof and estimate $\zeta_1$.
If $x \gtrsim \frac{1}{V^\alpha}$, $0<\alpha <1$, then the leading term is $xy^3$ and applying $MT^2$ to the triplet $(\xi_0, \zeta_0, \eta_0)$ gives
$$
\zeta_1 = V^{3-\alpha} + O(V^2)
$$
which is enough to conclude,
since otherwise $x \lesssim V^{-1}$, then there is two terms of at most order $V^2$ in  D and two terms of order at most $1$ in  $2y(z-xy)+x$. We now use the lemma and the estimate  $(xy-z) \gtrsim \frac{1}{V^\alpha}$ with $\alpha >0$ valid for triplet $(x,y,z)$ in $B_4 \bigcup B_5 \bigcup B_9 \bigcup B_{10}$. 
$$
|2y^2(xy-z)| \gtrsim V^{2-\alpha}.
$$
We obtain this estimate for $\zeta_1$  
$$
\zeta_1 = V^{2-\alpha} + O(V)
$$
which, once again, allows us to conclude.

\end{itemize}
\end{proof}

\appendix
\section{}

\subsection{Sketch of the Proof of Proposition \ref{symbolicok}} \label{symbolicproof}

It suffices to show that the set
\begin{equation} \label{set}
\bigcap_{i = -\infty}^{\infty} \Phi^{-i} \left (B_{s_i} \right ) = \{ {\bf x} \in \Sigma_V \: | \: \Phi^{-n}({\bf x}) \in B_{s_n} \, , \, n= 0, \pm 1 , \pm 2, \ldots \},
\end{equation}
consists of exactly one point. In fact defining the map $\rho : \mathcal A \rightarrow \mathcal D$ as
\begin{equation}
\rho : \{\ldots s_n, \ldots s_1, s_0, s_{-1} \ldots s_{n} \ldots\} \longmapsto \bigcap_{i = -\infty}^{\infty} \Phi^i \left (B_{s_i} \right ),
\end{equation}
one easily obtains the conjugation
\begin{equation}
 \Phi \circ \rho ({\bf x})= \rho \circ \sigma ({\bf x}).
\end{equation}

\begin{figure}[!ht]
\centering
\includegraphics[angle=0, width=1 \textwidth]{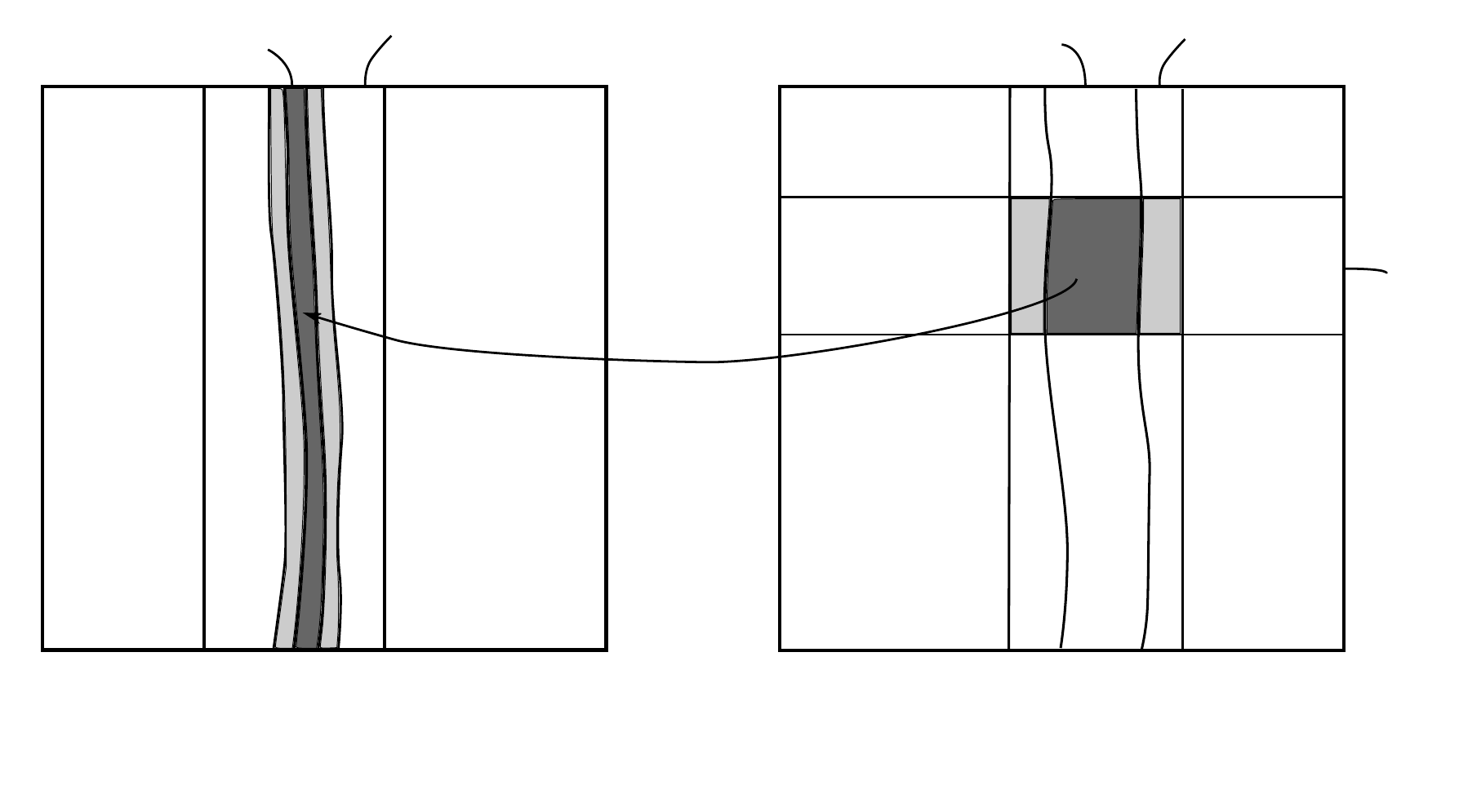}
\put(-102,192){$V$}
\put(-66,192){$B_t$}
\put(-262,192){$B_s$}
\put(-19,125){$B^s$}
\put(-87,88){$\gamma_r$}
\put(-99,100){$\gamma_l$}
\put(-94,130){$\widetilde V$}
\put(-308,190){$\widetilde B_s$}
\put(-195,115){$\Phi^{-1}$}
\caption{$\widetilde B_s := \Phi^{-1}(\widetilde V)$ is a vertical strip for all $s$ such that $\mathcal B_{ts} = 1$}
\label{Strips}
\end{figure}

To prove that the set defined in \eqref{set} consists of exactly one point one has to show that $\mathcal U({\bf s}) := \bigcap_{i = -\infty}^{-1} \Phi^{-i} \left (B_{s_i} \right )$ is an horizontal line and $\mathcal V({\bf s}) := \bigcap_{i = 0}^{\infty} \Phi^{-i} \left (B_{s_i} \right )$ is a vertical line. It follows that the set defined in \eqref{set}, being the intersection of $\mathcal U$ and $\mathcal V$, will define precisely a point in $\Sigma_V$ (it is of course crucial the restriction to codes $\mathbf{s}$ such that $\mathcal B_{s_i s_{i+1}} = 1$ otherwise both $\mathcal U$ and $\mathcal V$ could be empty).
In order to prove that $\mathcal U({\bf s})$ and $\mathcal V({\bf s})$ are respectively an horizontal and a vertical line it is enough to prove the following condition:

\begin{itemize}
\item[$(\star)$] Given any vertical strip $B_t \in \mathcal B_V$ then, for all $V \subset B_t \in \mathcal B_V$, $\widetilde B_s := \Phi^{-1}(V) \cap B_s$ is a vertical strip for all $s  \in \{1, \ldots 16\}$ such that $\mathcal B_{ts} = 1$, furthermore
\begin{equation} \label{shrinkv}
\text{diam} (\widetilde B_t) \leq \frac{1}{2} \text{diam} (B_t)
\end{equation}
for some $0 < \nu < 1$. Analogously given any horizontal strip $B^t \in \mathcal B^H$ then for all $V \subset B^t \in \mathcal B^H$, $\widetilde B^s := \Phi^{-1}(H) \cap B^s$ is an horizontal strip for all $s \in \{1, \ldots 16\}$ such that $\mathcal B_{st}$=1, furthermore
\begin{equation}\label{shrinkh}
\text {diam} (\widetilde B^t) \leq \frac{1}{2} \text {diam} (B^t).
\end{equation}
\end{itemize}

We leave to the reader the easy task to show that from condition $(\star)$ together with (a) it follows that $\mathcal U({\bf s}) \cap \mathcal V({\bf s})$ consists of a single point, which, as discussed above, proves Proposition \ref{symbolicok}). We shall show instead that $(\star)$ is implied by (a) and (b) in Proposition \ref{symbolicok}.
Indeed let $\gamma_l$, $\gamma_r$ be, respectively, the left and right boundaries of a vertical strip $ V \subset B_t \in \mathcal B_V$, by definition $\gamma_l$ and $\gamma_r$ intersect the horizontal boundaries of $B_t$. Now let $s$, be such that $\mathcal B_{ts} = 1$, then $\gamma_l$ and $\gamma_r$ intersect $B^s$ in two vertical curves $\widehat \gamma_l$ and $\widehat \gamma_r$ respectively. These vertical curves define a vertical strip $\widehat V = V \cap B^s$.
Since $\widehat \gamma_{l,r}$ are vertical curves, we have $\widehat \gamma_{l,r}(t) = (v_{l,r}(t) , t)$ with
\begin{equation}
\left \{ 
\begin{array}{lcr}
\dot{\widehat{\gamma}}_{l,r}(t) \in S_1^- & \text{if} & \widehat \gamma \subset R_1 \bigcup R_2 \vspace{0.2cm}\\
\dot{\widehat {\gamma}}_{l,r}(t) \in S_2^- & \text{if} & \widehat \gamma \subset R_5 \bigcup R_6
\end{array}
\right.
\end{equation}
condition (b) implies that 
\begin{equation}
\left \{ 
\begin{array}{lcr}
d\Phi^{-1} (\dot{\widehat \gamma}_{l,r}) \subset S_1^- & \text{if} & \Phi^{-1}(\widehat  \gamma_{l,r}) \subset R_1 \bigcup R_2 \vspace{0.2cm}\\
d\Phi^{-1} (\dot{\widehat \gamma}_{l,r}) \subset S_2^- & \text{if} & \Phi^{-1}(\widehat \gamma_{l,r}) \subset R_5 \bigcup R_6
\end{array}
\right.
\end{equation}
which means that $\Phi^{-1} (\widehat \gamma_{l,r})$ consists of two vertical curves in $B_s$, thus $\Phi^{-1}(\widetilde V) = \Phi^{-1} (V \cap B^s) = \Phi^{-1}(V) \cap B_s$ is a vertical strip. As for \eqref{shrinkv} and \eqref{shrinkh}, take two points $\mathbf{p_l}$ and $\mathbf{p_r}$ on the vertical boundaries of $\widetilde B_s$ such that
\begin{equation}
\text{diam} (\widetilde B_s) = \Vert \mathbf{p}_r-\mathbf{p}_l \Vert, \quad \text{where} \quad \Vert (p_1,p_2) \Vert := |p_1| + |p_2|, 
\end{equation}
hence $\mathbf{p}_l$ and $\mathbf{p}_r$ lie on the straight horizontal line $\mathbf{p}(t) = t\mathbf{p}_r + (1-t)\mathbf{p}_l$ for $t \in [0,1]$. Define the curve $\mathbf{w} : [0,1] \rightarrow \widetilde V$ by
\begin{equation} \label{curve}
\mathbf{w}(t) = \Phi(\mathbf{p}(t)),
\end{equation}
so that $\mathbf{w}$ is a horizontal curve and $\mathbf{w}(0) = (\gamma_l(z_0), z_0 )$ and $\mathbf{w}(1) = (\gamma_r(z_1), z_1)$ for some $z_0, z_1 \in [-2 ,2]$. We get   
\begin{equation} \label{1}
|\gamma_r(z_1) - \gamma_l(z_0)| \leq |\gamma_r(z_1) - \gamma_l(z_1)| + |\gamma_l(z_1) - \gamma_l(z_0)| \leq \text{diam} \widetilde V + \frac{1}{3}|z_1 - z_0|
\end{equation}
furthermore, since defintion \eqref{curve} together with condition $\eqref{cones}$ imply that $\mathbf{w}(t)$ is a horizontal curve, we have
\begin{equation}\label{2}
|z_1 - z_2| \leq \frac{1}{3} |\gamma_r(z_1) - \gamma_l(z_0) |.
\end{equation}
Thus, summing \eqref{1} and \eqref{2} we obtain
\begin{equation}
\Vert \mathbf{w}(1) - \mathbf{w}(0)\Vert = |z_1 - z_0| + |\gamma_r(z_1) - \gamma_l(z_0)| \leq \frac{3}{2} \text{diam}(\widetilde V).
\end{equation}
Now, writing  \footnote{$q(t)$ lies in the $x$ or the $y$ axis according wether $B_t$ lies, respectively, inside $R_1 \bigcup R_2$ or inside $R_5 \bigcup R_6$. } $\mathbf{w}(t) = (q(t), z(t))$, we can infer from condition \eqref{cones} that $|\dot x| \geq 3 |\dot{\mathbf{p}}| > 0$, hence
\begin{equation}
\begin{array}{ll}
\text{diam} (\widetilde B_s) & = \Vert \mathbf{p}_2 - \mathbf{p}_1\Vert  = \int_0^1 |\dot{\mathbf{p}(t)}|dt  \leq \frac{1}{3} \int_0^1 |\dot x(t)|dt = \frac{1}{3}|x(1) - x(0)| \vspace{0.1cm}\\
& \leq \frac{1}{3} |\mathbf{w}(1) - \mathbf{w}(0)| \leq \frac{1}{2} \text{diam}(\widetilde V)
\end{array}
\end{equation}

\hfill $\square$

\end{document}